\newtheorem{prelem}{{\bf Theorem}}
 \newtheorem{theorem}{Theorem}
\newtheorem{proposition}[theorem]{Proposition}
\theoremstyle{definition}
\newtheorem{remark}[theorem]{Remark}
\title{Quasi-total Roman domination in graphs}
\author{Suitberto Cabrera-Garc\'ia$^{1}$, Abel Cabrera-Mart\'{\i}nez$^{2}$, Ismael G. Yero$^{3,}$\footnote{\em Corresponding author.}\\
  \\
$^{1}$ {\small Universitat Polit\'ecnica de Valencia}\\{\small Departamento de Estad\'istica e Investigaci\'on Operativa Aplicadas y Calidad}\\{\small Camino de Vera s/n, 46022 Valencia, Spain.}\\ {\small
suicabga\@@eio.upv.es}\\
$^{2}$ {\small Universitat Rovira i Virgili}\\
{\small Departament d'Enginyeria Inform\`atica i Matem\`atiques } \\  {\small Av. Pa\"{\i}sos
Catalans 26, 43007 Tarragona, Spain.} \\{\small
  abel.cabrera\@@urv.cat}\\
$^{3}$ {\small Universidad de C\'adiz}\\
{\small Departamento de Matem\'aticas, Escuela Polit\'ecnica Superior de Algeciras}\\{\small Av. Ram\'on Puyol s/n, 11202 Algeciras, Spain.}\\ {\small
ismael.gonzalez\@@uca.es}
}
\date{}
\begin{document}
\maketitle

\begin{abstract}
A quasi-total Roman dominating function on a graph $G=(V, E)$ is a function $f : V \rightarrow \{0,1,2\}$ satisfying the following:
\begin{itemize}
  \item every vertex $u$ for which $f(u) = 0$ is adjacent to at least one vertex $v$ for which $f(v) =2$, and
  \item if $x$ is an isolated vertex in the subgraph induced by the set of vertices labeled with 1 and 2, then $f(x)=1$.
\end{itemize}
The weight of a quasi-total Roman dominating function is the value $\omega(f)=f(V)=\sum_{u\in V} f(u)$. The minimum weight of a quasi-total Roman dominating function on a graph $G$ is called the quasi-total Roman domination number of $G$. We introduce the quasi-total Roman domination number of graphs in this article, and begin the study of its combinatorial and computational properties.
\end{abstract}

\textbf{Keywords:} quasi-total Roman domination number; Roman domination number; total Roman domination number.

\textbf{2000 Mathematical Subject Classification :} 05C69

\section{Introduction}

Domination in graphs is a classical topic, and nowadays one of the most active areas of research in graph theory. This fact can be seen for instance through more than 1600 articles published in the topic (more than 1000 of them in the last 10 years), according to the MathSciNet database with the queries: ``domination number'' or ``dominating sets''. A high number of concepts, open problem, research lines are nowadays open and being dealt with.  The two books \cite{hhs1,hhs2}, although a little not updated by now, contain a large number of the most important and classical results in the topic before this new century. One of the topics which is intensively studied concern the so-called Roman domination, which is a domination version arising from some historical roots coming from the ancient Roman Empire. This article deals precisely with this style of  domination.

We consider $G$ as a non directed graph with vertex set $V(G)$ having neither loops nor multiple edges. A \emph{dominating set} of $G$ is a subset $D\subseteq V(G)$ of vertices such that every vertex not in $D$ is adjacent to at least one vertex in $D$. The minimum cardinality of a dominating set is called the \emph{domination number} of $G$ and is denoted by $\gamma (G)$.  This concept has many applications to several fields, since it naturally arises in facility location problems, in monitoring communication, or in electrical networks. A first variant of it is as follows. A \emph{total dominating set} of $G$ (whether it has no isolated vertex) is a set $S$ of vertices of $G$ such that every vertex is adjacent to a vertex in $S$. Every graph without isolated vertices has a total dominating set, since $S = V(G)$ is such a set. The \emph{total domination number} of $G$, denoted by $\gamma_t(G)$, is the minimum cardinality of a total dominating set. Many variants (like the total domination for instance) of the basic concepts of domination have appeared in the literature. We refer to \cite{hhs1,hhs2}  for a survey of the area. Moreover, for more information on total domination, we suggest the recent book \cite{HeYe_book} or the latest survey \cite{Henning2009}.

A variation of domination called Roman domination was formally introduced in graph theory, by Cockayne et al. in \cite{CDH04}, based on some works from ReVelle \cite{re1,re2} and Stewart \cite{St99}. Also see ReVelle and Rosing \cite{rer} for an integer programming formulation of the problem. The concept of Roman domination can be formulated in terms of graphs as follows. A \emph{Roman dominating function} (RDF for short) on a graph $G$ is a vertex labeling $f : V(G) \rightarrow \{0, 1, 2\}$ such that every vertex with label $0$ has a neighbor with label $2$.
For a RDF $f$, let $ V_i^f = \{v \in V (G) : f(v) = i\}$ for $i = 0, 1, 2$. Since this three sets determine $f$, we can equivalently write  $f=(V_0^f, V_1^f, V_2^f)$ (or simply $f=(V_0, V_1, V_2)$ if there is no confusion). The \emph{weight} $f(V(G))$ of a RDF $f$ on $G$ is the value $\omega(f)=\sum_{v\in V(G)} f(v)$, which equals $|V_1^f| + 2|V_2^f|$.
The \emph{Roman domination number} $\gamma_R(G)$ of $G$  is the minimum weight of a RDF on $G$. A RDF on a graph $G$ with minimum weight will be referred to as a $\gamma_R(G)$-function.
	
As a variation of Roman domination (and also as another variation of domination), Liu and Chang \cite{lc} have introduced the concept of total Roman domination in graphs although in a more general setting. In a more specific style, the total Roman domination number was first presented and deeply studied in \cite{total-Roman-1}. A \emph{total Roman dominating function} (TRDF) on a graph $G$ with no isolated vertex, is a Roman dominating function  $f = (V_0, V_1, V_2)$ on $G$ such that the set $\{v \in  V(G) : f (v) \not = 0\}$ induces a subgraph without isolated vertices. The  \emph{total Roman domination number} $\gamma_{tR} (G)$  is the minimum weight of a TRDF on $G$. A TRDF  with minimum weight in a graph $G$ is a $\gamma_{tR}(G)$-function.

Although the concept of total Roman domination number (as stated above) is very natural and several works has been already done on it, there is a fact which makes that its ``defensive'' features (according to the historical arising point of view) are notably increased to just achieve the ``total domination'' property. Consider for instance the graph drawn in Figure \ref{graph-example}.

\begin{figure}[ht]
\centering
\begin{tikzpicture}[scale=.6, transform shape]

\node [draw, shape=circle] (c) at  (0,0) {};
\node [draw, shape=circle] (a1) at  (2,1) {};
\node [draw, shape=circle] (a2) at  (4,1) {};
\node [draw, shape=circle] (b1) at  (2,3) {};
\node [draw, shape=circle] (b2) at  (4,3) {};
\node [draw, shape=circle] (a11) at  (2,-1) {};
\node [draw, shape=circle] (a21) at  (4,-1) {};
\node [draw, shape=circle] (b11) at  (2,-3) {};
\node [draw, shape=circle] (b21) at  (4,-3) {};
\node [draw, shape=circle] (r1) at  (-2,1) {};
\node [draw, shape=circle] (r2) at  (-2,-1) {};
\node [draw, shape=circle] (r3) at  (-2,3) {};
\node [draw, shape=circle] (r4) at  (-2,-3) {};

\node [scale=1.4] at (0,0.8) {\large 2};
\node [scale=1.4] at (2,1.7) {\large 1};
\node [scale=1.4] at (4,1.7) {\large 1};
\node [scale=1.4] at (2,3.7) {\large 1};
\node [scale=1.4] at (4,3.7) {\large 1};
\node [scale=1.4] at (2,-0.3) {\large 1};
\node [scale=1.4] at (4,-0.3) {\large 1};
\node [scale=1.4] at (2,-2.3) {\large 1};
\node [scale=1.4] at (4,-2.3) {\large 1};
\node [scale=1.4] at (-2.7,-3) {\large 0};
\node [scale=1.4] at (-2.7,3) {\large 0};
\node [scale=1.4] at (-2.7,-1) {\large 0};
\node [scale=1.4] at (-2.7,1) {\large 0};


\draw(a2)--(a1)--(c)--(b1)--(b2);
\draw(a21)--(a11)--(c)--(b11)--(b21);
\draw(r1)--(c)--(r2);
\draw(r3)--(c)--(r4);

\end{tikzpicture}
\hspace*{1cm}
\begin{tikzpicture}[scale=.6, transform shape]

\node [draw, shape=circle] (c) at  (0,0) {};
\node [draw, shape=circle] (a1) at  (2,1) {};
\node [draw, shape=circle] (a2) at  (4,1) {};
\node [draw, shape=circle] (b1) at  (2,3) {};
\node [draw, shape=circle] (b2) at  (4,3) {};
\node [draw, shape=circle] (a11) at  (2,-1) {};
\node [draw, shape=circle] (a21) at  (4,-1) {};
\node [draw, shape=circle] (b11) at  (2,-3) {};
\node [draw, shape=circle] (b21) at  (4,-3) {};
\node [draw, shape=circle] (r1) at  (-2,1) {};
\node [draw, shape=circle] (r2) at  (-2,-1) {};
\node [draw, shape=circle] (r3) at  (-2,3) {};
\node [draw, shape=circle] (r4) at  (-2,-3) {};

\node [scale=1.4] at (0,0.8) {\large 2};
\node [scale=1.4] at (2,1.7) {\large 2};
\node [scale=1.4] at (4,1.7) {\large 0};
\node [scale=1.4] at (2,3.7) {\large 2};
\node [scale=1.4] at (4,3.7) {\large 0};
\node [scale=1.4] at (2,-0.3) {\large 2};
\node [scale=1.4] at (4,-0.3) {\large 0};
\node [scale=1.4] at (2,-2.3) {\large 2};
\node [scale=1.4] at (4,-2.3) {\large 0};
\node [scale=1.4] at (-2.7,-3) {\large 0};
\node [scale=1.4] at (-2.7,3) {\large 0};
\node [scale=1.4] at (-2.7,-1) {\large 0};
\node [scale=1.4] at (-2.7,1) {\large 0};


\draw(a2)--(a1)--(c)--(b1)--(b2);
\draw(a21)--(a11)--(c)--(b11)--(b21);
\draw(r1)--(c)--(r2);
\draw(r3)--(c)--(r4);

\end{tikzpicture}
\caption{Two different labelings generating TRDFs on the drawn graph.}\label{graph-example}
\end{figure}
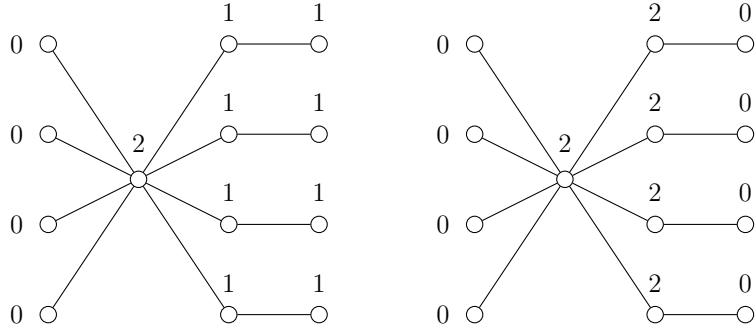

In the graph given in Figure \ref{graph-example}, the ``defensive'' features of both labelings are increased in order to have the ``total domination property'' of the function. That is, in one side, vertices labeled with one are not exactly ``in a dangerous'' situation since they have label one, and no other ``attacking'' vertex ``around'', or in second hand, there are two adjacent vertices labeled with two, and one of them has only one other neighbor. Thus, in some sense, one might consider allowing some not totally dominated vertices in a labeling whether the situation would be appropriate. For instance, a possible efficient (with respect to the sum of the weights assigned to the vertices) labeling having some partial total domination features of the graph of Figure \ref{graph-example} is shown in Figure \ref{graph-eff}. In connection with this situation, in this work we center our attention into some kind of a weaker version of the total Roman domination parameter, or a stronger version of the standard Roman domination one.

\begin{figure}[ht]
\centering
\begin{tikzpicture}[scale=.6, transform shape]

\node [draw, shape=circle] (c) at  (0,0) {};
\node [draw, shape=circle] (a1) at  (2,1) {};
\node [draw, shape=circle] (a2) at  (4,1) {};
\node [draw, shape=circle] (b1) at  (2,3) {};
\node [draw, shape=circle] (b2) at  (4,3) {};
\node [draw, shape=circle] (a11) at  (2,-1) {};
\node [draw, shape=circle] (a21) at  (4,-1) {};
\node [draw, shape=circle] (b11) at  (2,-3) {};
\node [draw, shape=circle] (b21) at  (4,-3) {};
\node [draw, shape=circle] (r1) at  (-2,1) {};
\node [draw, shape=circle] (r2) at  (-2,-1) {};
\node [draw, shape=circle] (r3) at  (-2,3) {};
\node [draw, shape=circle] (r4) at  (-2,-3) {};

\node [scale=1.4] at (0,0.8) {\large 2};
\node [scale=1.4] at (2,1.7) {\large 0};
\node [scale=1.4] at (4,1.7) {\large 1};
\node [scale=1.4] at (2,3.7) {\large 0};
\node [scale=1.4] at (4,3.7) {\large 1};
\node [scale=1.4] at (2,-0.3) {\large 0};
\node [scale=1.4] at (4,-0.3) {\large 1};
\node [scale=1.4] at (2,-2.3) {\large 0};
\node [scale=1.4] at (4,-2.3) {\large 1};
\node [scale=1.4] at (-2.7,-3) {\large 1};
\node [scale=1.4] at (-2.7,3) {\large 0};
\node [scale=1.4] at (-2.7,-1) {\large 0};
\node [scale=1.4] at (-2.7,1) {\large 0};


\draw(a2)--(a1)--(c)--(b1)--(b2);
\draw(a21)--(a11)--(c)--(b11)--(b21);
\draw(r1)--(c)--(r2);
\draw(r3)--(c)--(r4);

\end{tikzpicture}
\caption{An efficient labeling preserving some total dominating properties as well as allowing some other ones not totally dominated.}\label{graph-eff}
\end{figure}
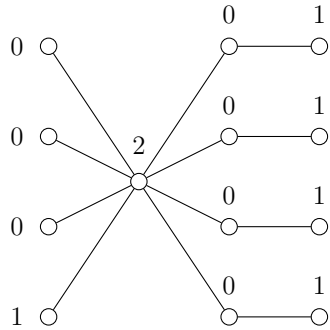

A \emph{quasi-total Roman dominating function} (QTRDF) on a graph $G$ is a function $f : V(G)\rightarrow \{0,1,2\}$ satisfying the following:
\begin{itemize}
  \item every vertex $u$ for which $f(u) = 0$ is adjacent to at least one vertex $v$ for which $f(v) =2$, and
  \item if $x$ is an isolated vertex in the subgraph induced by the set of vertices labeled with 1 and 2, then $f(x)=1$.
\end{itemize}
The \emph{weight} of a QTRDF $f$ is the value $\omega(f)=f(V(G))=\sum_{u\in V(G)} f(u)$. The minimum weight of a QTRDF on $G$ is called the \emph{quasi-total Roman domination number} of $G$ and is denoted by $\gamma_{qtR}(G)$.

In this work we introduce the concept above and begin with the study of its combinatorial and computational properties. In this sense, we next describe some basic terminology and notation that we shall need throughout our exposition and for the remaining basic terminology and notation on graph theory not given here, the reader is referred to \cite{w}.

We denote the vertex set and the edge set of a graph $G$ by $V(G)$ and $ E(G),$  respectively. The subgraph induced by $S \subseteq V(G)$ is denoted by $G[S]$. The complement of a graph $G$ is denoted by $\overline{G}$. For any vertex $x$ of a graph $G$, $N_G(x)$ denotes the set of all  neighbors of $x$ in $G$,
$N_G[x] = N_G(x) \cup \{x\}$ and the degree of $x$ is $d_G(x) = |N_G(x)|$. The  minimum and  maximum degree of a graph $G$ are denoted by $\delta(G)$ and $\Delta(G)$, respectively.
For a subset $A \subseteq V (G)$, let $N_G(A) = \cup_{x \in A} N_G(x)$ and $N_G[A] = N_G(A) \cup A$. For a graph $G$, let $x \in X \subseteq V(G)$. A vertex $y \in V(G)$ is a $X$-private neighbor of $x$ if $N_G[y] \cap X =\{x\}$. The $X$-private neighborhood  of $x$, denoted $pn_G(x,X)$, is the set of all $X$-private neighbors of $x$. The distance between two vertices $x, y \in V(G)$ is  denoted by $d_G(x, y)$.

If $G$ is a non-connected graph with connected components $C_1,\dots,C_s,I_1,\dots,I_r$, where each component $C_i$ has order at least two and each component $I_j$ is a singleton, then it is not difficult to observe that
$$\gamma_{qtR}(G)=r+\sum_{i=1}^s\gamma_{qtR}(G[C_i]).$$
In view of this fact, from now we only consider connected graphs, although we will not explicitly mention it, unless the state of the situation will require to involved not connected graphs.

\section{Primary combinatorial and computational results}

It is not difficult to see that any TRDF is also a QTRDF, as well as, a QTRDF is also a RDF. In concordance, the following chain of inequalities directly follows.
\begin{equation}\label{rd-wtrd-trd}
\gamma_R(G)\le \gamma_{qtR}(G)\le \gamma_{tR}(G)
\end{equation}

We first observe that the differences $\gamma_{qtR}(G)-\gamma_R(G)$ and $\gamma_{tR}(G)-\gamma_{qtR}(G)$ can be as large as possible. To this end, we consider the graph $G_1$, which is obtained as follows. We begin with a singleton vertex $u$. Next we add, $t$ copies of the graph $H$ drawn in Figure \ref{graph-H}, and join with an edge the vertex $u$ with the vertex $w$ of each added copy of $H$.

\begin{figure}[ht]
\centering
\begin{tikzpicture}[scale=.6, transform shape]

\node [draw, shape=circle] (x15) at  (0,10) {};
\node [draw, shape=circle] (x14) at  (0,8) {};
\node [draw, shape=circle] (x7) at  (-8,8) {};
\node [draw, shape=circle] (x25) at  (8,8) {};
\node [draw, shape=circle] (x1) at  (-12,6) {};
\node [draw, shape=circle] (x3) at  (-10,6) {};
\node [draw, shape=circle] (x6) at  (-8,6) {};
\node [draw, shape=circle] (x10) at  (-2,6) {};
\node [draw, shape=circle] (x13) at  (0,6) {};
\node [draw, shape=circle] (x19) at  (2,6) {};
\node [draw, shape=circle] (x24) at  (8,6) {};
\node [draw, shape=circle] (x29) at  (10,6) {};
\node [draw, shape=circle] (x34) at  (12,6) {};
\node [draw, shape=circle] (n1) at  (-6,6) {};

\node [scale=1.4] at (0,10.5) {\large $w$};
\node [scale=1.4] at (-8,8.6) {\large $w_1$};
\node [scale=1.4] at (8,8.6) {\large $w_3$};
\node [scale=1.4] at (0.8,8) {\large $w_2$};
\node [scale=1.4] at (-5.4,6) {\large $x$};
\node [scale=1.4] at (-8.7,6.1) {\large $x'$};

\draw(x1)--(x7)--(x15)--(x25)--(x34);
\draw(x3)--(x7)--(x6)--(n1);
\draw(x10)--(x14)--(x13);
\draw(x15)--(x14)--(x19);
\draw(x29)--(x25)--(x24);

\end{tikzpicture}
\caption{The graph $H$ with the labeling used in proofs.}\label{graph-H}
\end{figure}
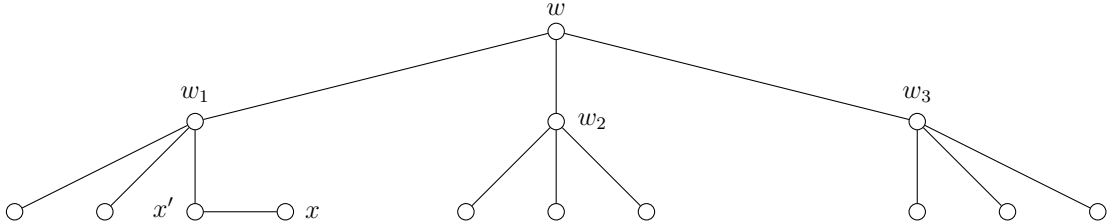

We consider the following functions defined on the graph $G_1$.
\begin{itemize}
  \item $f_1$: it assigns two to each vertex $w_i$, $i\in \{1,2,3\}$, of every copy of $H$, one to the vertex $x$ of every copy of $H$ and to the central vertex $u$, and zero otherwise.
  \item $f_2$: it assigns two to each vertex $w_i$, $i\in \{1,2,3\}$, of every copy of $H$, one to the vertices $x,x',w$ of every copy of $H$ and to the central vertex $u$, and zero otherwise.
  \item $f_3$: it assigns two to each vertex $w_i$, $i\in \{1,2,3\}$, of every copy of $H$, one to the vertices $x,w$ of every copy of $H$ and to the central vertex $u$, and zero otherwise.
\end{itemize}
Observe that $f_1$ is a RDF, $f_2$ is a TRDF and $f_3$ is a QTRDF, and so, $\gamma_R(G_1)\le \omega(f_1)=7t+1$, $\gamma_{tR}(G_1)\le \omega(f_2)=9t+1$ and $\gamma_{qtR}(G_1)\le \omega(f_3)=8t+1$. On the other hand, by making some observation on the construction, we deduce that such functions above are of minimum weight, which means we have equalities in these stated bounds. As a consequence, one can check that the differences $\gamma_{qtR}(G_1)-\gamma_R(G_1)=t$ and $\gamma_{tR}(G_1)-\gamma_{qtR}(G_1)=t$ can be as large as possible.

The inequalities given in (\ref{rd-wtrd-trd}) show lower and upper bounds for $\gamma_{qtR}(G)$ in terms of $\gamma_{R}(G)$ and $\gamma_{tR}(G)$, respectively. We next show that also lower and upper bounds for $\gamma_{qtR}(G)$ can be given, now in terms of $\gamma_{tR}(G)$ and $\gamma_{R}(G)$, respectively.

\begin{proposition}
Let $G$ be a graph without isolated vertices.
\begin{enumerate}[{\rm (i)}]
  \item If $f=(V_0,V_1,V_2)$ is any $\gamma_{R}(G)$-function, then $\gamma_{qtR}(G)\le \gamma_{R}(G)+|V_2|$.
  \item If $f'=(V'_0,V'_1,V'_2)$ is any $\gamma_{qtR}(G)$-function, then $\gamma_{tR}(G)\le \gamma_{qtR}(G)+|V'_1|$.
\end{enumerate}
\end{proposition}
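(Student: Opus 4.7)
The plan for both parts is the same in spirit: take the given function, identify the vertices that violate the stronger condition we want, and remedy each such violation by promoting one zero-labeled neighbor to label $1$. The cost of each promotion is exactly $1$, so the task reduces to bounding the number of violations by the claimed quantity.

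For part (i), start with a $\gamma_R(G)$-function $f=(V_0,V_1,V_2)$. Since $f$ is already a RDF, the only obstruction to it being a QTRDF is a vertex $x\in V_1\cup V_2$ that is isolated in $G[V_1\cup V_2]$ and carries the label $2$ (label $1$ on an isolated vertex is allowed by the QTRDF definition). Let $A$ be the set of such vertices, so $A\subseteq V_2$ and $|A|\le |V_2|$. Because $G$ has no isolated vertex, each $v\in A$ has a neighbor, which must lie in $V_0$. For every $v\in A$ choose one such neighbor $w_v\in V_0$, set $W=\{w_v:v\in A\}$, and define $g$ from $f$ by changing the label of each vertex in $W$ from $0$ to $1$. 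I would then verify that $g$ is a QTRDF: the Roman condition is preserved since $V_2$ is unchanged; every $v\in A$ now has the neighbor $w_v\in V_1$, so no vertex of $V_2$ is isolated in the new induced subgraph; isolated vertices among the label-$1$ vertices are permitted. Finally, $\omega(g)=\omega(f)+|W|\le \gamma_R(G)+|V_2|$, giving the bound.

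For part (ii), start with a $\gamma_{qtR}(G)$-function $f'=(V_0',V_1',V_2')$. The obstruction to being a TRDF is now any vertex of $V_1'\cup V_2'$ isolated in $G[V_1'\cup V_2']$; by the QTRDF condition such a vertex must be labeled $1$, so the set $B$ of bad vertices is contained in $V_1'$ and hence $|B|\le|V_1'|$. As before, for each $v\in B$ pick a neighbor $w_v\in V_0'$ (which exists and lies in $V_0'$ because $v$ is isolated in $G[V_1'\cup V_2']$), and form $h$ by promoting each vertex of $W'=\{w_v:v\in B\}$ to label $1$. Then each formerly isolated $v\in B$ now has the neighbor $w_v$ in $V_1''$, each newly promoted $w_v\in W'$ has the neighbor $v\in V_1''$, the remaining label-$1$ and label-$2$ vertices still have the same neighbors they had in $G[V_1'\cup V_2']$, and no label-$0$ vertex loses its label-$2$ neighbor. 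Hence $h$ is a TRDF of weight $\omega(f')+|W'|\le \gamma_{qtR}(G)+|V_1'|$.

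The only subtlety, and the step that deserves care in both parts, is the bookkeeping on isolated label-$2$ versus label-$1$ vertices in the induced subgraph. In (i) one must notice that label-$1$ vertices isolated in $G[V_1\cup V_2]$ are already acceptable under the QTRDF definition, so they need not be fixed and therefore do not contribute to the weight increase; the promotions are indexed only by isolated vertices of $V_2$. In (ii) the crucial observation is the dual one: the QTRDF condition itself guarantees that no vertex of $V_2'$ is isolated in $G[V_1'\cup V_2']$, so the promotions are indexed only by a subset of $V_1'$. Recognising these asymmetries is what yields the tight bounds $|V_2|$ and $|V_1'|$ instead of larger quantities such as $|V_1|+|V_2|$.
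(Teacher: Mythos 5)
Your proof is correct and follows essentially the same construction as the paper: for each offending vertex (an isolated label-$2$ vertex in part (i), an isolated label-$1$ vertex in part (ii)) promote one zero-labeled neighbor to label $1$, increasing the weight by at most $|V_2|$ (resp.\ $|V'_1|$). The paper states this in two sentences without the careful case bookkeeping, so your write-up is simply a more detailed version of the same argument.
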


\begin{proof}
Clearly, if we choose one neighbor with minimum weight of each vertex $v\in V_2$ and label it with one, then we obtain a QTRDF of $G$ with weight at most $\gamma_{R}(G)+|V_2|$. Similarly, if we take one neighbor with minimum weight of each vertex $v\in V'_1$ and label it with one, then we obtain a TRDF of $G$ with weight at most $\gamma_{qtR}(G)+|V'_1|$.
\end{proof}

We next give some families of graphs where the bounds above are achieved. Consider a graph $G$ without isolated vertices. We next add $k\ge 3$ pendant vertices to each vertex of $G$ and we also multisubdivide each edge of $G$ with two vertices  (by multisubdividing an edge $uv$ we mean removing the edge $uv$ and adding a path $P_2=xz$ and the edges $ux$, $zv$). We denote by $G_{2,k}$ the graph obtained in this way. Now, to obtain a graph $G'_k$ from $G$, we add $k\ge 3$ pendant vertices to each vertex of $G$ and subdivide (with only one vertex) exactly one the pendant edges of every vertex of $G$.

\begin{remark}
Let $G$ be any graph with minimum degree at least two.
\begin{enumerate}[{\rm (i)}]
  \item There exists a $\gamma_{R}(G_{2,k})$-function $f=(V_0,V_1,V_2)$ such that $\gamma_{qtR}(G_{2,k})=\gamma_{R}(G_{2,k})+|V_2|$.
  \item There exists a $\gamma_{qtR}(G'_k)$-function $f'=(V'_0,V'_1,V'_2)$ such that $\gamma_{tR}(G'_k)= \gamma_{qtR}(G'_k)+|V'_1|$.
\end{enumerate}
\end{remark}

\begin{proof}
(i) We first observe that $G_{2,k}$ has a unique $\gamma_{R}(G_{2,k})$-function $f=(V_0,V_1,V_2)$ of weight $2n$, where $n$ is the order of the graph $G$, such that $V_2=V(G)$, $V_1=\emptyset$ and $V_0=V(G_{2,k})\setminus V_2$. Clearly, such described function is a RDF on $G_{2,k}$, which means $\gamma_{R}(G_{2,k})\le 2n$. On the other hand, let $f'$ be any $\gamma_{R}(G_{2,k})$-function. If any pendant vertex $v$ of $G_{2,k}$ is labeled with one or two under $f'$, then the vertex $u\in V(G)$ adjacent to $v$ must have label 0 or 1, otherwise we can easily construct a RDF on $G_{2,k}$ with weight smaller than that of $f'$, which is not possible. Consequently, we consider all $k$ pendant vertices adjacent to $u$ have label one or two also. Since $k\ge 3$, by relabeling $u$ with two and its adjacent pendant vertices with zero, we obtain a new RDF with weight smaller than $\gamma_{R}(G_{2,k})$, a contradiction. Thus, all pendant vertices of $G_{2,k}$ are labeled zero under $f'$. As a consequence, every vertex $u\in V(G)$ must have label two, which leads to claim that all the vertices used in the multisubdivision process have label zero also. Since $f'$ is arbitrarily taken, because of its forced structure, we deduce that it must be unique and must have weight $2n$.

Now, by labeling with one, exactly one neighbor of each vertex $u\in V_2=V(G)$, from $f$ we obtain a QTRDF of weight $3n$ and so, $\gamma_{qtR}(G_{2,k})\le 3n=\gamma_{R}(G_{2,k})+|V_2|$. Suppose now that $\gamma_{qtR}(G_{2,k})<3n$ and let $g$ be any $\gamma_{qtR}(G_{2,k})$-function. Consider the partition of the vertex set of $G_{2,k}$ given by the closed neighborhoods of vertices of $G$, that is, the set $\{N[u]\,:\,u\in V(G)\}$. Since there are $n$ sets in this partition, there must exist a vertex $w\in V(G)$ such that $g(N[w])\le 2$. Since there are $k\ge 3$ pendant vertices in $N[w]$, at least one of them must have label zero, which means $g(w)=2$ and, in addition, all vertices in $N(w)$ must have label zero. Thus, $w$ is a vertex with label two under $g$ and has no neighbor with label one or two under $g$, which contradicts the fact that $g$ is a QTRDF. Therefore, $\gamma_{qtR}(G_{2,k})=3n=\gamma_{R}(G_{2,k})+|V_2|$.

(ii) By using somehow similar ideas to the proof of (i), we can deduce that $\gamma_{qtR}(G'_k)=3n$ with a unique $\gamma_{qtR}(G'_k)$-function $f'=(V'_0,V'_1,V'_2)$ given by $V'_2=V(G)$, $V'_1=\{v\in V(G'_k)\,:\,\mbox{$v$ has degree one and distance at least two to every vertex in $V(G)$}\}$ and $V'_0=V(G'_k)\setminus (V'_1\cup V'_2)$. Observe that the vertices labeled with one have no neighbors labeled with one or two. Thus, by labeling with one the unique neighbor of every vertex in $V'_1$, we obtain a TRDF of weight $4n$, and so, $\gamma_{tR}(G'_k)\le 4n=\gamma_{qtR}(G'_k)+|V'_1|$.

On the other hand, if we suppose that $\gamma_{tR}(G'_k)< 4n$, then again some relatively similar procedure as in item (i) will lead to a contradiction. Thus, $\gamma_{tR}(G'_k)=4n=\gamma_{qtR}(G'_k)+|V'_1|$, which completes the proof.
\end{proof}

It is natural to think that $\gamma_{qtR}(G)$ is related to other domination parameters of the graphs $G$. In this sense, we now present some bounds for $\gamma_{qtR}(G)$ in terms of $\gamma(G)$ and $\gamma_{t}(G)$, and for this, we need two known results from \cite{chellali2015} and \cite{total-Roman-1} and a new terminology. Given a set $S$ and a vertex $v$ of a graph $G$, the \emph{external private neighborhood} $epn_G(v,S)$ consists of those $S$-private neighbors of $v$ in $V(G)\setminus S$.

\begin{theorem}{\em \cite{chellali2015}}\label{total-Roman}
If $G$ is a graph without isolated vertices, then $\gamma_t(G)\leq \gamma_R(G)$.
\end{theorem}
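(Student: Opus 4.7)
The plan is to start from any $\gamma_R(G)$-function $f = (V_0, V_1, V_2)$ and explicitly construct a total dominating set $D$ of $G$ of size at most $|V_1| + 2|V_2| = \gamma_R(G)$. The natural first guess, $D_0 = V_1 \cup V_2$, is already a dominating set (every $v \in V_0$ has a neighbor in $V_2$ by the Roman condition), but it need not be \emph{total}: some vertices of $V_1 \cup V_2$ may be isolated inside $G[V_1 \cup V_2]$. I would therefore isolate the two kinds of ``troublesome'' vertices, $A = \{v \in V_2 : N_G(v) \cap (V_1 \cup V_2) = \emptyset\}$ and $B = \{v \in V_1 : N_G(v) \cap (V_1 \cup V_2) = \emptyset\}$, and treat them asymmetrically.

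For each $v \in A$, I would pick any $a(v) \in N_G(v)$ (which necessarily lies in $V_0$, since $G$ has no isolated vertex and $v$ has no neighbor in $V_1 \cup V_2$) and \emph{add} $a(v)$ to the set. For each $v \in B$, I would pick any $b(v) \in N_G(v)$ (again forced to lie in $V_0$) and \emph{replace} $v$ by $b(v)$, that is, drop $v$ from $D_0$ and insert $b(v)$ instead. The candidate total dominating set is $D = (V_1 \setminus B) \cup V_2 \cup \{a(v) : v \in A\} \cup \{b(v) : v \in B\}$.

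The verification that $D$ is a total dominating set splits cleanly. Every $u \in V_0$ is dominated because its $V_2$-neighbor stays in $D$; each added vertex $a(v)$ or $b(v)$ is totally dominated inside $D$ by the $V_2$-neighbor it possesses as a member of $V_0$; every $v \in A$ is totally dominated by $a(v) \in D$; and every $v \in B$, which is no longer in $D$, is dominated by $b(v) \in D$. The delicate point is that dropping the vertices of $B$ from $D_0$ must not strand any other vertex of $V_1 \cup V_2$; here I would invoke the defining property of $B$, namely that a $B$-vertex has \emph{no} neighbor in $V_1 \cup V_2$ whatsoever, so any $u \in (V_1 \setminus B) \cup V_2$ that has a neighbor in $V_1 \cup V_2$ automatically has one in $(V_1 \setminus B) \cup V_2 \subseteq D$.

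The weight bound is then a one-line count: $|D| \le |V_1 \setminus B| + |V_2| + |A| + |B| = |V_1| + |V_2| + |A| \le |V_1| + 2|V_2| = \gamma_R(G)$, using $A \subseteq V_2$. The main obstacle is therefore not the arithmetic but the correctness argument in the previous paragraph, which is exactly what the asymmetric treatment (addition for $V_2$-vertices, replacement for $V_1$-vertices) is designed to make work.
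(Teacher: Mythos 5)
Your argument is correct. Note first that the paper does not prove this statement at all: it is quoted as a known result from the cited reference (Chellali, Haynes, Hedetniemi), so there is no in-paper proof to compare against; your write-up is a self-contained replacement for the citation. The construction is sound: the asymmetric treatment is exactly the right idea, since a vertex of $V_2$ contributes $2$ to $\omega(f)$ and can therefore afford to ``pay for'' one added neighbour $a(v)$, whereas a vertex of $V_1$ contributes only $1$ and must be \emph{swapped} for its helper $b(v)$ rather than supplemented. The two points that could sink such a proof are both handled: (a) every neighbour of a vertex in $A\cup B$ is forced into $V_0$ (no isolated vertices plus the defining property of $A$ and $B$), so $a(v)$ and $b(v)$ exist and are themselves totally dominated by their $V_2$-neighbours, which remain in $D$; and (b) deleting $B$ cannot strand any $u\in (V_1\setminus B)\cup V_2$, because a witness neighbour $w\in N(u)\cap(V_1\cup V_2)$ can never lie in $B$ (it has $u$ as a neighbour in $V_1\cup V_2$). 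The count $|D|\le |V_1|+|V_2|+|A|\le |V_1|+2|V_2|=\gamma_R(G)$ then closes the argument. The only cosmetic remark: the maps $a(\cdot)$ and $b(\cdot)$ need not be injective, but this only decreases $|D|$, which the inequality already absorbs.
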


\begin{theorem}{\em \cite{total-Roman-1}}\label{total-totalRoman}
If $G$ is a graph with no isolated vertex, then $\gamma_t(G) \le \gamma_{tR}(G) \le 2\gamma_t(G)$.
\end{theorem}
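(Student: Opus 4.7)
The plan is to establish each of the two inequalities via a direct conversion between optimal objects for the two parameters, which is the standard route for inequalities of this shape and should require no auxiliary machinery beyond unfolding the definitions.

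For the lower bound $\gamma_t(G)\le \gamma_{tR}(G)$, I would start with any $\gamma_{tR}(G)$-function $f=(V_0,V_1,V_2)$ and argue that $S:=V_1\cup V_2$ is itself a total dominating set of $G$. The Roman condition gives every $v\in V_0$ a neighbor in $V_2\subseteq S$, while the TRDF requirement that $G[V_1\cup V_2]$ has no isolated vertex forces every vertex of $S$ to have a neighbor in $S$ as well. Therefore every vertex of $G$ has a neighbor in $S$, so $\gamma_t(G)\le |S|=|V_1|+|V_2|\le |V_1|+2|V_2|=\omega(f)=\gamma_{tR}(G)$.

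For the upper bound $\gamma_{tR}(G)\le 2\gamma_t(G)$, I would reverse the construction. Let $S$ be a $\gamma_t(G)$-set and define $f\colon V(G)\to\{0,1,2\}$ by $f(v)=2$ if $v\in S$ and $f(v)=0$ otherwise. Every vertex outside $S$ has a neighbor in $S=V_2$ because $S$ dominates $V(G)\setminus S$, so the Roman condition is met; and since $S$ is \emph{total} dominating, the subgraph $G[V_1\cup V_2]=G[S]$ has no isolated vertex. Hence $f$ is a TRDF, giving $\gamma_{tR}(G)\le \omega(f)=2|S|=2\gamma_t(G)$.

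Neither direction seems to pose a real obstacle: both are one-line conversions. The only subtle point worth flagging is that for the upper bound one genuinely needs $S$ to be total dominating (not merely dominating), so that assigning label $2$ to all of $S$ leaves $G[S]$ without isolated vertices; and for the lower bound one must exploit the totality clause of the TRDF, not just the Roman clause, since otherwise one would only recover a $\gamma(G)\le\gamma_R(G)$-style inequality rather than the total versions.
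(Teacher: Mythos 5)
Your proof is correct. Note that the paper does not prove this statement at all: it is imported verbatim from the cited reference on total Roman domination and used as a black box, so there is no in-paper argument to compare against. Your two conversions are exactly the standard ones (take $V_1\cup V_2$ of an optimal TRDF as a total dominating set for the lower bound; assign $2$ to every vertex of a $\gamma_t(G)$-set for the upper bound), and your flagged subtleties -- that the upper bound genuinely needs $S$ to be \emph{total} dominating so that $G[S]$ has no isolated vertex, and that the lower bound must invoke the totality clause of the TRDF -- are precisely the points that make the argument go through. Nothing is missing.
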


\begin{theorem}
For any nontrivial connected graph $G$, $$\gamma_t(G)\leq \gamma_{qtR}(G)\leq 2\gamma_t(G).$$
Furthermore,
\begin{enumerate}[{\rm (i)}]
\item $\gamma_{qtR}(G)=\gamma_t(G)$ if and only if $G\cong P_2$,
\item $\gamma_{qtR}(G)=\gamma_t(G)+1$ if and only if $\gamma_{qtR}(G)=3$, and
\item $\gamma_{qtR}(G)=2\gamma_t(G)$ if and only if $\gamma_{qtR}(G)=\gamma_{tR}(G) $ and $\gamma_{tR}(G)=2\gamma_t(G)$.
\end{enumerate}
\end{theorem}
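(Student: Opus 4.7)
My plan is to base all four items on a single structural observation: from any QTRDF $f=(V_0,V_1,V_2)$ of $G$, one can extract a total dominating set of size at most $|V_1|+|V_2|$. Let $I\subseteq V_1\cup V_2$ denote the vertices that are isolated in the induced subgraph $G[V_1\cup V_2]$; the second QTRDF condition forces $I\subseteq V_1$, and every $G$-neighbor of a vertex of $I$ lies in $V_0$. Fix for each $v\in I$ a neighbor $\phi(v)\in V_0$ and set $D=V_2\cup(V_1\setminus I)\cup\phi(I)$. A direct check (every $V_0$-vertex is dominated by $V_2$; every $V_2$-vertex and every vertex of $V_1\setminus I$ has a neighbor in $V_2\cup(V_1\setminus I)\subseteq D$, because an $I$-vertex cannot be adjacent to any vertex of $V_1\cup V_2$; every $v\in I$ is dominated by $\phi(v)\in D$) shows $D$ is a total dominating set of $G$, so
\[
\gamma_t(G)\le |D|\le |V_1|+|V_2|\le |V_1|+2|V_2|=\omega(f).
\]
Applied to a $\gamma_{qtR}(G)$-function, this yields the lower bound $\gamma_t(G)\le\gamma_{qtR}(G)$ together with the sharper estimate $\gamma_t(G)\le\gamma_{qtR}(G)-|V_2|$. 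The upper bound $\gamma_{qtR}(G)\le 2\gamma_t(G)$ is immediate from the chain $\gamma_{qtR}(G)\le\gamma_{tR}(G)\le 2\gamma_t(G)$ obtained by combining (\ref{rd-wtrd-trd}) with Theorem \ref{total-totalRoman}.

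For (i), the direction $G\cong P_2\Rightarrow\gamma_{qtR}(G)=\gamma_t(G)=2$ is immediate by inspection. Conversely, if $\gamma_{qtR}(G)=\gamma_t(G)$ and $f$ is a $\gamma_{qtR}$-function, the sharp estimate forces $|V_2|=0$; the RDF condition then forces $V_0=\emptyset$, so $|V_1|=n$ and $\gamma_t(G)=n$. Since every connected graph on $n\ge 3$ vertices satisfies $\gamma_t(G)\le n-1$ (a standard fact, as one can remove a well-chosen vertex adjacent to a leaf from the full vertex set), we conclude $n=2$ and $G\cong P_2$.

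For (ii), the reverse direction uses the bounds: $\gamma_{qtR}(G)=3$ forces $\gamma_t(G)\in\{2,3\}$, and (i) excludes $\gamma_t(G)=3$, yielding $\gamma_t(G)=2$. For the forward direction, assume $\gamma_{qtR}(G)=\gamma_t(G)+1$ and take a $\gamma_{qtR}$-function $f$. The sharp estimate gives $|V_2|\le 1$; the case $|V_2|\ge 2$ would yield $\gamma_t(G)\le|V_1|+|V_2|\le\gamma_t(G)-1$, impossible. If $|V_2|=0$, we recover $n=\gamma_t(G)+1$, and the standard bound forces $n=3$, hence $\gamma_{qtR}(G)=3$. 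The remaining case $|V_2|=1$, written $V_2=\{v\}$, gives $V_0\subseteq N(v)$ and $|V_1|=\gamma_t(G)-1$; the subcase $V_0=\emptyset$ reduces to the $|V_2|=0$ case by substituting $f(v)=1$. The subcase $V_0\ne\emptyset$ is the core technical obstacle: there, if $|V_1\cap N(v)|\ge 2$, relabelling some $w\in V_1\cap N(v)$ from $1$ to $0$ preserves all QTRDF conditions (the only vertex of label $2$ is $v$, which retains a neighbor in $(V_1\setminus\{w\})\cup\{v\}$) and produces a QTRDF of weight $\gamma_t(G)$, contradicting (i) when $\gamma_t(G)\ge 3$ (since then $G\not\cong P_2$); and if $|V_1\cap N(v)|=1$, a further swap exchanging the roles of $v$ and the unique such vertex $w_0$, possibly aided by a relabelling of a vertex of $V_0$, produces the same contradiction. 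Hence $\gamma_t(G)=2$, so $\gamma_{qtR}(G)=3$.

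Finally, (iii) follows immediately from the chain $\gamma_{qtR}(G)\le\gamma_{tR}(G)\le 2\gamma_t(G)$: equality $\gamma_{qtR}(G)=2\gamma_t(G)$ forces equality at every step, giving both $\gamma_{qtR}(G)=\gamma_{tR}(G)$ and $\gamma_{tR}(G)=2\gamma_t(G)$; the converse is trivial. The main obstacle in the whole argument is the $|V_2|=1$ with $V_0\ne\emptyset$ subcase of (ii), where the sharp estimate is tight and one needs a delicate local relabelling of the $\gamma_{qtR}$-function to manufacture a lighter QTRDF and thereby force $\gamma_t(G)=2$.
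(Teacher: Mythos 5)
Your structural lemma is correct, and it is actually cleaner than the paper's own route to the lower bound: by sending each vertex that is isolated in $G[V_1\cup V_2]$ to an arbitrary neighbour in $V_0$ you obtain $\gamma_t(G)\le |V_1|+|V_2|=\omega(f)-|V_2|$ for \emph{every} QTRDF $f$, whereas the paper needs a $\gamma_{qtR}(G)$-function with $|V_2|$ maximum and an external-private-neighbour argument to build essentially the same total dominating set. The upper bound, item (i), item (iii), the reverse implication of (ii), and the subcases $|V_2|\ge 2$ and $|V_1\cap N(v)|\ge 2$ are all fine (in the latter the relabelling produces a QTRDF of weight $\gamma_{qtR}(G)-1$, so the contradiction is simply with minimality; invoking (i) is unnecessary). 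Two small repairs: in the $|V_2|=0$ branch of (ii) the bound $\gamma_t(G)\le n-1$ does not force $n=3$ from $\gamma_t(G)=n-1$; you need $\gamma_t(G)\le 2n/3$ for connected graphs of order at least $3$ (also standard). And for (i) the vertex to delete is a leaf (or any vertex when $\delta(G)\ge 2$), not a vertex adjacent to a leaf.

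The genuine gap is the subcase you yourself flag as the core obstacle: $|V_2|=1$, $V_0\ne\emptyset$ and $N(v)\cap V_1=\{w_0\}$. The sentence ``a further swap exchanging the roles of $v$ and $w_0$, possibly aided by a relabelling of a vertex of $V_0$'' is not an argument, and the obvious candidate fails: after setting $f'(v)=1$, $f'(w_0)=2$, every vertex of $V_0$ still needs a neighbour labelled $2$, and nothing forces $V_0\subseteq N(w_0)$ (for instance $w_0$ may be a leaf hanging from $v$); relabelling a single vertex of $V_0$ cannot repair all of $V_0$ without increasing the weight. This subcase is where the content of (ii) actually lives. Here $N[v]=V_0\cup\{v,w_0\}$ and $L:=V_1\setminus\{w_0\}=V(G)\setminus N[v]$ has $|L|=\gamma_t(G)-2\ge 1$ under the assumption $\gamma_t(G)\ge 3$, and one must analyse how $L$ attaches to the rest of the graph. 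For example, if every $z\in L$ has a neighbour $\psi(z)\in V_0$, then $\{v\}\cup\psi(L)$ is a total dominating set of size at most $\gamma_t(G)-1$, a contradiction; the configurations in which some $z\in L$ has all its neighbours inside $V_1$ require further relabelling arguments that produce either a lighter QTRDF or a minimum QTRDF with two vertices labelled $2$ (which your sharp estimate then kills). None of this is supplied, so the forward implication of (ii) is not established as written.
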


\begin{proof}
By Theorem \ref{total-Roman} and inequality (\ref{rd-wtrd-trd}), we get $\gamma_t(G)\leq \gamma_{qtR}(G)$ and from inequality (\ref{rd-wtrd-trd}) and Theorem \ref{total-totalRoman}, we have $\gamma_{qtR}(G)\leq 2\gamma_t(G)$.

Now, let $f=(V_0,V_1,V_2)$ be a $\gamma_{qtR}(G)$-function such that $|V_2|$ is maximum. We consider $V_{1,2}=\{v\in V_1: N(v)\cap (V_1\cup V_2)\neq\emptyset\}$ and let $V_{1,0}=V_1\setminus V_{1,2}$. Notice that $epn(z, V_{1,0})\neq \emptyset$ for every $z\in V_{1,0}$. Otherwise, if there exists a vertex $z\in V_{1,0}$ such that $epn(z, V_{1,0})=\emptyset$, then there exist two vertices $y\in V_{1,0}$ and $x\in V_0$ such that $x\in N(z)\cap N(y)$. Notice that $x$ is adjacent to a vertex of $V_2$. Hence, the function $f'=(V_0',V_1',V_2')$, defined by $f'(x)=2$, $f'(y)=f'(z)=0$ and $f'(u)=f(u)$ if $u\in V(G)\setminus \{x,y,z\}$, is a QTRDF on $G$ satisfying $|V_2'|>|V_2|$, which is a contradiction.

Now, let $A$ be a subset of $\cup_{z\in V_{1,0}}epn(z, V_{1,0})$ such that $A$ contains exactly one vertex of $epn(z, V_{1,0})$ for every $z\in V_{1,0}$.  Observe that $|A|=|V_{1,0}|$, and that $A\cup V_{1,2}\cup V_2$ is a total dominating set of $G$. Thus
$$\gamma_t(G)\leq |A|+|V_{1,2}|+|V_2|=|V_1|+|V_2|\leq |V_1|+2|V_2|=\gamma_{qtR}(G).$$

(i) If $\gamma_{qtR}(G)=\gamma_t(G)$, then $V_2=\emptyset$, which implies that $V_0=\emptyset$ and $\gamma_{qtR}(G)=\gamma_t(G)=n$. Therefore $G\cong P_2$. The other implication is straightforward to see.

(ii) If $\gamma_{qtR}(G)=\gamma_t(G)+1$, then $|V_2|=1$ and $|A|=|V_{1,0}|=0$. Hence $\gamma_{qtR}(G)=3$.  The other implication is straightforward to see.

(iii) The proof of this item is straightforward to observe, and therefore, omitted.
\end{proof}

Since $\gamma_t(G)\le 2\gamma(G)$ for any graph $G$ (see \cite{HeYe_book}), the theorem above leads to $\gamma_{qtR}(G)\leq 4\gamma(G).$ However, this bound can be improved as we next show.

\begin{theorem}
Let $G$ be any graph without isolated vertices. If $f=(V_0,V_1,V_2)$ is a $\gamma_{qtR}(G)$-function and $V_{1,2}^*=\{v\in V_1\,:\,N(v)\cap V_2\ne \emptyset\}$, then $$\gamma(G)+|V_2|+|V_{1,2}^*|\le \gamma_{qtR}(G)\le 3\gamma(G).$$
\end{theorem}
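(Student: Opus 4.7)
The plan is to prove both bounds by direct constructive arguments.

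For the lower bound, I will start from the crucial observation that $V_1 \cup V_2$ is always a dominating set of $G$ (the vertices of $V_0$ are dominated via the RDF condition). The key insight is that the vertices in $V_{1,2}^*$ are redundant: every $v \in V_{1,2}^*$ already has a neighbor in $V_2$, so $v$ is dominated by $V_2$. Moreover, removing $V_{1,2}^*$ does not destroy the domination of any other vertex, since every neighbor of $v \in V_{1,2}^*$ lies in $V_0$ (still dominated by $V_2$ by the RDF condition), in $V_2$ (in the set), or in $V_1$ (in the set, assuming it is in $V_1 \setminus V_{1,2}^*$; and if it were in $V_{1,2}^*$, it too has its own neighbor in $V_2$). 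Therefore $(V_1 \setminus V_{1,2}^*) \cup V_2$ is a dominating set, which gives $\gamma(G) \le |V_1| - |V_{1,2}^*| + |V_2|$, and rearranging yields $\gamma(G) + |V_2| + |V_{1,2}^*| \le |V_1| + 2|V_2| = \gamma_{qtR}(G)$.

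For the upper bound, I will take a minimum dominating set $D$ of $G$ and build a QTRDF from it. The first attempt is to label every vertex of $D$ with $2$ and every other vertex with $0$; this is clearly an RDF of weight $2\gamma(G)$, but it may fail the QTRDF condition whenever some $v \in D$ is isolated in $G[D]$, since then $v$ carries label $2$ while being isolated in the subgraph induced by $V_1 \cup V_2 = D$. To repair this, for each vertex $v \in D$ that is isolated in $G[D]$, I pick an arbitrary neighbor $v' \in V(G) \setminus D$ (which exists since $G$ has no isolated vertices) and relabel $v'$ with $1$. I will then verify that the resulting function $f$ is a QTRDF: the RDF condition is preserved because no label was changed from $2$ or $1$ to $0$, and every vertex in $V_2 = D$ now has a neighbor in $V_1 \cup V_2$ (either it already had a neighbor in $D$, or its chosen companion was relabeled to $1$); each relabeled vertex $v'$ is adjacent to the $V_2$-vertex $v$, so it is not isolated in $G[V_1 \cup V_2]$ either.

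Counting weights, the number of relabeled vertices is at most $|D|$ (one per isolated vertex of $G[D]$), so $\omega(f) \le 2|D| + |D| = 3\gamma(G)$, giving $\gamma_{qtR}(G) \le 3\gamma(G)$. The only mildly delicate step is the verification that removing $V_{1,2}^*$ from $V_1 \cup V_2$ still dominates every vertex of $G$, but this follows directly from the RDF property and the definition of $V_{1,2}^*$, so I do not expect a genuine obstacle in either direction.
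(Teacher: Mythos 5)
Your proof is correct and follows essentially the same route as the paper: the lower bound via the observation that $V_2\cup(V_1\setminus V_{1,2}^*)$ is a dominating set, and the upper bound by assigning $2$ to a minimum dominating set and $1$ to one outside neighbor of each vertex isolated in the induced subgraph of that set. The only difference is cosmetic: you spell out the domination check in more detail than the paper does.
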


\begin{proof}
We consider a dominating set $S$. Now, we construct a set $S'$ in the following way. For any vertex $v\in S$ such that $N(v)\cap S=\emptyset$, we choose one neighbor $v'$ of $v$ (which exists since $G$ has no isolated vertices), and add it to $S'$. Now, the function $f=(V(G)\setminus(S\cup S'),S',S)$ is a QTRDF on $G$ of weight at most $3\gamma(G)$, and we are done for the upper bound.

Now, we prove the lower bound. Observe that $V_2\cup (V_1\setminus V_{1,2}^*)$ is a dominating set in $G$. Thus,
$$\gamma(G)\le |V_2|+|V_1\setminus V_{1,2}^*|=|V_2|+|V_1|-|V_{1,2}^*|=2|V_2|+|V_1|-|V_2|-|V_{1,2}^*|=\gamma_{qtR}(G)-|V_2|-|V_{1,2}^*|,$$
which completes the proof.
\end{proof}

For the tightness of the upper bound above we consider for instance any star graph, or any path or cycle graph of order $3k$ for some integer $k\ge 1$. \\

As we can may guess, the problem of computing the quasi-total Roman domination number of graphs is as hard as its related predecessors. The NP-completeness of the decision problem concerning $\gamma_R(G)$ was proved in \cite{Dreyer}, while a similar result was obtained in \cite{lc} for the case of $\gamma_{tR}(G)$. At next we prove also the NP-completeness of the decision problem related to $\gamma_{qtR}(G)$, that is, of the next problem.

$$\begin{tabular}{|l|}
  \hline
  \mbox{QUASI-TOTAL ROMAN DOMINATION problem (QTRD problem)}\\
  \mbox{INSTANCE: A non trivial graph $G$ without isolated vertices}
  \mbox{and a positive integer $r$.}\\
  \mbox{PROBLEM: Deciding whether $\gamma_{qtR}(G)$ is less than $r$.}\\
  \hline
\end{tabular}$$

In order to study the problem above from a complexity point of view, we shall make a reduction from the NP-complete decision problem concerning computing the Roman domination number of graphs, \emph{i.e.}, the next problem.

$$\begin{tabular}{|l|}
  \hline
  \mbox{ROMAN DOMINATION problem}\\
  \mbox{INSTANCE: A non trivial graph $G$ and a positive integer $r$.}\\
  \mbox{PROBLEM: Deciding whether the Roman domination number of $G$}
  \mbox{is larger than $r$.}\\
  \hline
\end{tabular}$$

We next introduce a graph $G'$ to be used in the reduction. Consider a graph $H$ as shown in Figure \ref{graph-H}. Now, given any graph $G$ of order $n$, a graph $G'$ is constructed as follows. For any vertex $z\in V(G)$, we add a copy of the graph $H$ and the edge $zw$. By using this construction and the ROMAN DOMINATION problem, we are able to prove the NP-completeness of the QTRD problem.

\begin{theorem}
The QTRD problem is NP-complete.
\end{theorem}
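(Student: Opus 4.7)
First, I would argue that the QTRD problem is in NP, since given a function $f:V(G')\to\{0,1,2\}$ of weight less than $r$, both defining conditions of a QTRDF can be checked in polynomial time. For NP-hardness, my plan is to reduce the ROMAN DOMINATION problem to QTRD using the polynomial-size construction $G \mapsto G'$ introduced just before the statement, and to prove the exact formula
\[
\gamma_{qtR}(G') \;=\; 8n + \gamma_R(G),
\]
where $n=|V(G)|$. Once this identity is established, the threshold of an RD instance $(G,r)$ translates directly into a corresponding threshold of a QTRD instance $(G',r')$, yielding a polynomial-time many-one reduction.

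For the upper bound, I would start from a $\gamma_R(G)$-function $g$ and extend it to a function $f$ on $G'$ as follows: set $f=g$ on $V(G)$, and on each copy $H_z$ attached to $z\in V(G)$ assign label $2$ to $w_1,w_2,w_3$, label $1$ to $w$ and to $x$, and label $0$ to the remaining vertices of $H_z$. A direct verification shows that $f$ is a QTRDF: the pendants of $w_1,w_2,w_3$ and the vertex $x'$ are dominated by some $w_i$; the vertex $x$ is isolated in the induced subgraph of nonzero labels but carries label $1$, as demanded by the definition; $w_1,w_2,w_3$ and $w$ mutually prevent isolation; every $z\in V(G)$ with $g(z)=0$ is Roman-dominated by $g$ in $G$; and every $z$ with $g(z)\ge 1$ has the neighbor $w$ of label $1$ to avoid isolation. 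Since each copy of $H$ contributes exactly $8$ to $\omega(f)$, we obtain $\gamma_{qtR}(G')\le 8n+\gamma_R(G)$.

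The matching lower bound is the principal obstacle, and I would attack it through a local analysis of any QTRDF $f$ restricted to each copy $H_z$, proving two claims: \emph{(a)} $f(V(H_z))\ge 8$, and \emph{(b)} if $f(z)=0$ and no neighbor of $z$ in $V(G)$ has label $2$, then $f(V(H_z))\ge 9$. To prove \emph{(a)}, I partition $V(H_z)$ into the blocks $A_i=\{w_i\}\cup L(w_i)$ for $i=1,2,3$, together with $\{w\}$ and $\{x',x\}$, where $L(w_i)$ denotes the pendant neighbors of $w_i$ inside $H_z$. Because each such pendant is dominated only by $w_i$, we must have either $f(w_i)=2$ (so $A_i$ contributes at least $2$) or all pendants of $w_i$ with label at least $1$ (so $A_i$ contributes at least $|L(w_i)|$); since $|L(w_1)|=2$ and $|L(w_2)|=|L(w_3)|=3$, the total $A$-contribution is at least $6$, and $\{x,x'\}$ contributes at least $1$ because $x$ is dominated only by $x'$ and otherwise is forced to carry label $1$. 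The remaining unit comes from the isolation clause: if $f(w_2)=f(w_3)=2$ then, since their only non-leaf neighbor is $w$, either $f(w)\ge 1$ or at least one pendant of $w_2$ or $w_3$ must carry label at least $1$, which adds the needed unit; and if $f(w_j)\ne 2$ for some $j\in\{2,3\}$, the $A$-contribution already increases by at least one. Claim \emph{(b)} then follows by observing that under its hypotheses $w$ is the only remaining candidate to Roman-dominate $z$, so $f(w)=2$, and combining this with the block analysis gives the extra unit.

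Summing \emph{(a)} and \emph{(b)} across $z\in V(G)$ yields
\[
\omega(f)\;\ge\;8n+\sum_{z\in V(G)}f(z)+|P(f)|,
\]
where $P(f)$ is the set of vertices $z\in V(G)$ with $f(z)=0$ and no $G$-neighbor of $f$-value $2$. To conclude, I would define $g:V(G)\to\{0,1,2\}$ by $g(z)=1$ if $z\in P(f)$ and $g(z)=f(z)$ otherwise, and verify that $g$ is an RDF of $G$: any vertex $z$ with $g(z)=0$ lies outside $P(f)$ and therefore retains a $G$-neighbor $y$ with $g(y)=f(y)=2$. Hence $\omega(g)=\sum_{z\in V(G)}f(z)+|P(f)|\ge \gamma_R(G)$, which together with the upper bound yields $\gamma_{qtR}(G')=8n+\gamma_R(G)$ and completes the reduction.
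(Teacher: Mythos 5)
Your proposal is correct, uses the same construction $G\mapsto G'$ and the same target identity $\gamma_{qtR}(G')=8n+\gamma_R(G)$ as the paper, and your upper-bound half coincides with the paper's. Where you genuinely diverge is in the lower bound. The paper argues by asserting structural properties of \emph{every} optimal QTRDF $g$ of $G'$ (namely $g(w_i)=2$ for all $i$, $g(w)=1$, $g(x)\ge 1$ in each copy of $H$) and then concludes that $g$ restricted to $V(G)$ is an RDF of $G$; this is terse and glosses over a real subtlety, since a priori an optimal function could have, say, $g(w)=2$ with some $z\in V(G)$ relying on $w$ for its Roman domination, in which case the restriction to $V(G)$ is not an RDF. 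Your block decomposition of each copy $H_z$ into $A_1,A_2,A_3,\{w\},\{x,x'\}$, the two claims $f(V(H_z))\ge 8$ and $f(V(H_z))\ge 9$ when $z$ is not Roman-dominated from within $G$, and the subsequent relabeling of the vertices of $P(f)$ to produce an RDF of $G$ handle exactly this case and make the accounting airtight; your case analysis for the ``extra unit'' in claim (a) (distinguishing whether $f(w_2)=f(w_3)=2$) checks out against the structure of $H$. You also supply the NP membership argument, which the paper omits. In short, your route is a more robust version of the same reduction: it costs a slightly longer local analysis but removes the unjustified claim that every optimal QTRDF of $G'$ has a fixed shape on the gadgets.
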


\begin{proof}
Consider any graph $G$ of order $n$, and the graph $G'$ as described above. Let $f$ be any $\gamma_R(G)$-function, and let $f'$ be a function on $G'$ defined as follows. For any vertex $v\in V(G)$, $f'(v)=f(v)$. For any vertex $v\in V(H)$ of any copy of $H$ used to generate $G'$, if $v=w$ or $v=x$, then $f(v)=1$; if $v=w_i$, for some $i\in\{1,2,3\}$, then $f(w_i)=2$; and $f(v)=0$ otherwise. We observe that every vertex labeled with zero is adjacent to a vertex labeled with two, and that any vertex labeled with two has one neighbor labeled with one or two. Thus, $f'$ is a QTRDF on $G'$, which leads to $\gamma_{qtR}(G')\le \omega(f')=8n+\gamma_R(G)$.

On the other hand, assume $g=(V_0,V_1,V_2)$ is a $\gamma_{qtR}(G')$-function. According to the structure of $G'$, we can readily seen that $g(w_i)=2$ for every $w_i\in V(H)$ of any copy of $H$, used to generate $G'$. In order to have the partial total domination property in the set $V_1\cup V_2$, it must happen that each vertex $w_i$ labeled two will have a neighbor labeled one or two. In this sense, it must also be $g(w)=1$ for any copy of $H$. Moreover, $g(x)\ge 1$ also occurs. As a consequence, any vertex $u\in V(G)$ for which $g(u)=0$ is adjacent to a vertex $u'\in V(G)$ for which $g(u')=2$, which means that the function $g$ restricted to $V(G)$ is a RDF on $G$, and so, $\gamma_R(G)\le g(V(G))$. Thus, $\gamma_{qtR}(G')=\omega(g)=g(V(G))+\sum_{i=1}^{n}g(V(H))\ge\gamma_R(G)+ \sum_{i=1}^{n}8=8n+\gamma_R(G)$, and therefore, the equality $\gamma_{qtR}(G')=8n+\gamma_R(G)$ follows. Now, by taking $j=8n+k$, it is readily seen that $\gamma_{qtR}(G')\leq j$ if and only if $\gamma_R(G)\leq k$, which completes the reduction.
\end{proof}

According to the result above, it clearly happens that computing the quasi-total Roman domination number of graphs is an NP-hard problem.
In this sense, it is desirable to bound its value or compute the exact value for several non trivial families of graphs. We next center our attention on these goals. To this end, one first consider some algorithmic technique for finding a QTRDF of a graph. Despite it does not behave so ``efficiently'', it has a positive value while giving some bounds or proving some results concerning the quasi-total Roman domination number of graphs.

\begin{algorithm}[ht]
\caption{Finding a QTRDF of a graph}\label{algor}
\vspace*{0.2cm}
Input: A graph $G$ of order $n\geq 2$\\
Output: a QTRDF
\begin{algorithmic}
\STATE $V_0=\emptyset$, $V_1=\emptyset$, $V_2=\emptyset$
\STATE order $V(G)$ with respect to the degree of the vertices in decreasing order
\STATE $L$ is the list of ordered vertices
\WHILE{$|L|\ge 1$ and the first element of $L$ has degree at least one}
\STATE take first vertex $v\in L$
\STATE add $v$ to $V_2$
\STATE add a neighbor $v'$ of $v$ to $V_1$
\STATE add $N(v)-\{v'\}$ to $V_0$
\STATE remove $N[v]$ from $L$
\STATE update $G$ as the graph induced by $V(G)-N[v]$
\STATE order $V(G)$ with respect to the degree of the vertices in decreasing order
\STATE update $L$
\ENDWHILE
\STATE add remaining vertices to $V_1$ (vertices with degree zero)
\end{algorithmic}
\end{algorithm}

In each step of Algorithm \ref{algor}, the contribution of the weights assigned to the vertices over the weight of the constructed function $f$ is three. Moreover, the algorithm creates a partition $\Pi=\{A_1,A_2,\dots,A_q,I\}$ of the vertex set $V(G)$ such that $A_i=N[v_i]$, for some vertex $v_i\in V(G)$ and $|A_i|\ge 2$ for every $i\in\{1,\dots,q\}$; and $I$ induces a set of isolated vertices. In consequence, $\gamma_{qtR}(G)\le \omega(f)=3q+|I|$.
We next find some specific cases where such bound gives some interesting consequences.

\begin{proposition}\label{Delta-order}
If a graph $G$ of order $n$ has $q\ge 1$ vertices $v_1,v_2,\dots, v_q$ of degree larger than two such that the distance between any two of them is at least three, and the set of vertices not adjacent to a vertex $v_i$, $i\in \{1,\dots,q\}$, form an independent set, then $\gamma_{qtR}(G)\le n+3q-\sum_{i=1}^q(d_G(v_i)+1)$.
\end{proposition}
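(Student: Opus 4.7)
The plan is to exhibit a QTRDF on $G$ whose weight equals the claimed bound, building the labeling by mimicking Algorithm~\ref{algor} but forcing the processed vertices to be exactly $v_1,\ldots,v_q$. First I would unpack the hypotheses: since $d_G(v_i,v_j)\ge 3$ for every $i\ne j$, any vertex in $N_G[v_i]\cap N_G[v_j]$ would yield a walk of length at most two from $v_i$ to $v_j$, so the closed neighborhoods $N_G[v_1],\ldots,N_G[v_q]$ are pairwise disjoint. In particular $\bigl|\bigcup_{i=1}^q N_G[v_i]\bigr|=\sum_{i=1}^q(d_G(v_i)+1)$, and the set $I:=V(G)\setminus\bigcup_{i=1}^q N_G[v_i]$ has cardinality $n-\sum_{i=1}^q(d_G(v_i)+1)$. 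The independence hypothesis tells me that $G[I]$ has no edges.

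Next I would define $f:V(G)\to\{0,1,2\}$ as follows. For each $i\in\{1,\ldots,q\}$, set $f(v_i)=2$, choose an arbitrary $v_i'\in N_G(v_i)$ (which exists because $d_G(v_i)>2$) and set $f(v_i')=1$, and set $f(u)=0$ for every $u\in N_G(v_i)\setminus\{v_i'\}$. Finally, set $f(u)=1$ for every $u\in I$. Disjointness of the $N_G[v_i]$ guarantees that the four categories $\{v_i\}$, $\{v_i'\}$, $N_G(v_i)\setminus\{v_i'\}$, $I$ partition $V(G)$, so $f$ is well defined.

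To verify $f$ is a QTRDF, I would check the two conditions. Any vertex $u$ with $f(u)=0$ lies in $N_G(v_i)\setminus\{v_i'\}$ for a unique $i$, hence is adjacent to $v_i$ with $f(v_i)=2$. Every vertex of label $2$ is some $v_i$, and $v_i$ is adjacent to $v_i'$ with $f(v_i')=1$, so no vertex labeled $2$ is isolated in the subgraph induced by $V_1^f\cup V_2^f$. Vertices of label $1$ are allowed by definition to be isolated in that subgraph, so nothing more needs checking. Summing contributions,
$$\omega(f)=2q+q+|I|=3q+n-\sum_{i=1}^q(d_G(v_i)+1),$$
which gives the required upper bound on $\gamma_{qtR}(G)$.

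The only subtle point is the bookkeeping: ensuring the four categories above really partition $V(G)$, and that the QTRDF verification handles each category correctly. Once the disjointness of the $N_G[v_i]$ is established from the distance hypothesis, the degree condition $d_G(v_i)>2$ is only used to know some neighbor $v_i'$ exists, and the independence of $I$ plays the role it does in Algorithm~\ref{algor}: it ensures that the vertices left over after removing the processed closed neighborhoods contribute exactly $|I|$ to the weight (as labels $1$) rather than forcing further iterations with heavier contributions.
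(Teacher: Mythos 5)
Your proof is correct, and it reaches the bound by exactly the labeling the paper has in mind: each $v_i$ gets $2$, one chosen neighbor gets $1$, the rest of $N(v_i)$ gets $0$, and the leftover vertices get $1$. The difference is procedural: the paper obtains this function by running Algorithm~\ref{algor} and asserting that it performs $q$ steps, each removing the closed neighborhood of one $v_i$, whereas you construct the function directly from the given vertices $v_1,\dots,v_q$. Your route is actually the more robust one, for two reasons. First, Algorithm~\ref{algor} greedily selects a vertex of \emph{maximum} degree at each step, and nothing in the hypotheses forces that vertex to be one of the $v_i$ (a neighbor of some $v_i$ could have larger degree), so the paper's claim that the algorithm processes precisely the $v_i$ needs more justification than is given; your explicit construction sidesteps this entirely. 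Second, your argument makes visible that the independence of $I=V(G)\setminus\bigcup_i N_G[v_i]$ is never actually invoked in the direct construction (label-$1$ vertices are permitted to be isolated in $G[V_1\cup V_2]$), so the bound holds even without that hypothesis --- it is only needed to guarantee that the greedy algorithm terminates after the $q$ steps. Your disjointness argument for the closed neighborhoods from the distance-three condition, and the resulting weight count $3q+|I|$, are both accurate.
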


\begin{proof}
By running Algorithm \ref{algor} over such graph, we notice the following facts.
\begin{itemize}
  \item The algorithm makes $q$ steps.
  \item In each step, the closed neighborhood of a vertex of degree larger than two is removed.
  \item At the end of a running time, a set of isolated vertices $I$ remains (these ones not adjacent to a vertex of degree larger than two).
\end{itemize}
In consequence, we deduce that the weight of the function created by the algorithm has weight  $3q+|I|$. Now, since $|I|=n-\sum_{i=1}^q(d_G(v_i)+1)$, we deduce the upper bound.
\end{proof}

A trivial example which shows the tightness of the bound above can be easily noticed by taking any graph $G$ of order $n$ and maximum degree $n-1$. In this case, we obtain $\gamma_{qtR}(G)\le 3$ which is exact as we know from Proposition \ref{3-and-n} (i). We next present another
family of graphs for which the bound above is tight.

Similarly to the graph $G_{2,k}$ previously defined, we define the graph $G_{3,k}$ as a graph obtained from a graph $G$, but now the multisubdivision process is made with 3 vertices. In this sense, we observe that $G_{3,k}$ has order $n+3m+nk=n(k+1)+3m$, where $n$ and $m$ are the order and size of the graph $G$ used to generate $G_{3,k}$.

\begin{remark}
If $G$ is a graph of minimum degree at least three, order $n$ and size $m$, then $\gamma_{qtR}(G_{3,k})=3n+m$.
\end{remark}

\begin{proof}
Since $G_{3,k}$ has order $n+3m+nk=n(k+1)+3m$, the vertices corresponding to that ones of the original graph $G$ has degree at least three (and can be taken as the set of $q$ vertices of Proposition \ref{Delta-order}), and those vertices not adjacent to the vertices of $G$ form an independent set in $G_{3,k}$, from Proposition \ref{Delta-order} we have that
$$\gamma_{qtR}(G_{3,k})\le n+3m+nk+3n-\sum_{v\in V(G)}(d_{G_{3,k}}(v)+1)= n+3m+nk+3n-2m-n-nk =3n+m.$$
On the other hand, let $f$ be any $\gamma_{qtR}(G_{3,k})$-function such the set of vertices labeled with two is of minimum cardinality. If there exists a vertex $v\in V(G)$ for which $f(v)\ne 2$, then every pendant vertex adjacent to $v$ must have label at least one. Since there are at least three pendant vertices adjacent to $v$, by some simple case analysis, we observe that one can always find a QTRDF of weight smaller than that of $f$, which is a contradiction. Thus, every vertex $v\in V(G)$ satisfies $f(v)=2$. Since every vertex labeled with two needs a neighbor with label one or two, there should be at least one neighbor $v'\in N(v)$ for every $v\in V(G)$ such that $f(v')\ge 1$. Thus, since the distance in $G_{3,k}$ between any two vertices of $G$ is at least four, it follows $f(N_{G_{3,k}}[v])\ge 3$ for every vertex $v\in V(G)$, which leads to $f(V(G_{3,k})\setminus \bigcup_{v\in V(G)}N_{G_{3,k}}[v])=f(V(G_{3,k}))-f(\bigcup_{v\in V(G)}N_{G_{3,k}}[v])\le \gamma_{qtR}(G_{3,k})-3n\le m$.

If $f(V(G_{3,k})\setminus \bigcup_{v\in V(G)}N_{G_{3,k}}[v])<m$, then since $|V(G_{3,k})\setminus\bigcup_{v\in V(G)}N_{G_{3,k}}[v]|=m$, there exists a vertex (a central vertex of one path used to multisubdivide one edge of $G$) $x$ such that $f(x)=0$. This means that one of its two neighbors, say $x'$ satisfies $f(x')=2$. Now, notice that $x'\in N(u)$ for some $u\in V(G)$, and that $f(u)=2$. Now, by relabeling the vertices $x$ and $x'$ with one, we obtain a new $\gamma_{qtR}(G_{3,k})$-function with a smaller number of vertices labeled with two, which is a contradiction with our assumption. Consequently, $f(V(G_{3,k})\setminus \bigcup_{v\in V(G)}N_{G_{3,k}}[v])=m$, and so
$$\gamma_{qtR}(G_{3,k})=f(V(G_{3,k})\setminus\bigcup_{v\in V(G)}N_{G_{3,k}}[v])+f(\bigcup_{v\in V(G)}N_{G_{3,k}}[v])\ge m+3n,$$
and the equality follows.
\end{proof}

We next continue with some other bounds for $\gamma_{qtR}(G)$ in terms of other parameters.

\begin{theorem}\label{maximum-degree}
For any graph $G$ of order $n$ and maximum degree $\Delta(G)\ge 2$,
$$\left\lceil \frac{2n}{\Delta}\right\rceil\le \gamma_{qtR}(G)\le n-\Delta(G)+2.$$
\end{theorem}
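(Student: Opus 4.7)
The plan is to handle the two bounds separately, with a counting argument based on the structure of a $\gamma_{qtR}(G)$-function for the lower bound, and an explicit construction centered at a vertex of maximum degree for the upper bound.

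For the lower bound, I would take any $\gamma_{qtR}(G)$-function $f=(V_0,V_1,V_2)$ and exploit the QTRDF rules twice. First, since every vertex of $V_0$ has a neighbor in $V_2$, the set $V_2$ dominates $V_0$. Second, no vertex of $V_2$ can be isolated in $G[V_1\cup V_2]$ (otherwise the second defining condition would force it to have label $1$), so each $v\in V_2$ has at least one neighbor inside $V_1\cup V_2$ and therefore at most $\Delta(G)-1$ neighbors inside $V_0$. Summing over $V_2$ gives $|V_0|\le (\Delta(G)-1)|V_2|$, so
\[
n=|V_0|+|V_1|+|V_2|\le |V_1|+\Delta(G)|V_2|.
\]
Using $\Delta(G)\ge 2$, the right-hand side is at most $\tfrac{\Delta(G)}{2}(|V_1|+2|V_2|)=\tfrac{\Delta(G)}{2}\gamma_{qtR}(G)$, which yields $\gamma_{qtR}(G)\ge 2n/\Delta(G)$ and hence the stated ceiling, since $\gamma_{qtR}(G)$ is an integer.

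For the upper bound, I would fix a vertex $v$ with $d_G(v)=\Delta(G)$, pick any neighbor $u$ of $v$, and define $g(v)=2$, $g(u)=1$, $g(w)=0$ for every $w\in N(v)\setminus\{u\}$, and $g(z)=1$ for every $z\in V(G)\setminus N[v]$. Verifying that $g$ is a QTRDF is routine: every vertex labeled $0$ lies in $N(v)$ and so has the neighbor $v$ with label $2$; the vertex $v$ and its chosen neighbor $u$ dominate each other inside $G[V_1\cup V_2]$; and every other vertex of $V_1\cup V_2$ carries label $1$, so the second QTRDF condition is satisfied regardless of whether such a vertex is isolated in the induced subgraph on $V_1\cup V_2$. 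A direct weight count gives $\omega(g)=2+1+(n-\Delta(G)-1)=n-\Delta(G)+2$.

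Neither step is a genuine obstacle; the only point that deserves care is justifying the inequality $|V_0|\le (\Delta(G)-1)|V_2|$, which is where the ``quasi-total'' hypothesis (rather than the merely Roman one) is used to strengthen the trivial bound $|V_0|\le \Delta(G)|V_2|$ and makes the lower bound $2n/\Delta(G)$ come out cleanly for $\Delta(G)\ge 2$.
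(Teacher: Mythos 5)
Your proposal is correct and follows essentially the same route as the paper: the lower bound via $|V_0|\le(\Delta(G)-1)|V_2|$ (using that no vertex of $V_2$ is isolated in $G[V_1\cup V_2]$) combined with $\Delta(G)\ge 2$, and the upper bound via the function assigning $2$ to a maximum-degree vertex, $1$ to one chosen neighbor and to everything outside its closed neighborhood, and $0$ elsewhere. No gaps; your write-up is in fact slightly more explicit than the paper's about why the second QTRDF condition yields the key inequality.
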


\begin{proof}
We first prove the lower bound. Let $f=(V_0,V_1,V_2)$ be a $\gamma_{qtR}(G)$-function. By definition, each vertex $v\in V_0$ is adjacent to at least one vertex $u\in V_2$, and since every vertex in $V_2$ has a neighbor labeled with positive weight under $f$, we must have $|V_0|\le (\Delta-1)|V_2|$. So, it follows
\begin{eqnarray*}
\Delta \gamma_{qtR}(G)&=&\Delta(|V_1|+2|V_2|)\\
                       &\geq &\Delta|V_1|+2|V_0|+2|V_2|\\
                       &\geq &2|V_1|+2|V_0|+2|V_2|\\
                       &\geq & 2n.
\end{eqnarray*}
Therefore, $\gamma_{qtR}(G)\geq \left\lceil \frac{2n}{\Delta}\right\rceil$.

To prove the upper bound, we observe that, if $v$ is a vertex of maximum degree in $G$ and $v'$ is a neighbor of $v$, then the function $g=(N(v)\setminus \{v'\},(V(G)\setminus N[v])\cup \{v'\},\{v\})$ is a QTRDF on $G$ with weight $n-\Delta(G)+2$.
\end{proof}

For the particular case in which $\Delta(G)=2$, Theorem \ref{maximum-degree} leads to $\gamma_{qtR}(G)= n$, and we next prove that the only case in which $\gamma_{qtR}(G)=n$ is precisely whether $\Delta(G)=2$. Moreover, since every vertex labeled with two in a QTRD must have a neighbor labeled one or two, we can readily seen that $\gamma_{qtR}(G)\ge 3$ for any graph $G$, unless $G$ is the graph $K_2$.

\begin{proposition}\label{3-and-n}
For any graph $G$ of order $n\ge 3$, $3\le \gamma_{qtR}(G)\le n$. Moreover,
\begin{enumerate}[{\rm (i)}]
  \item $\gamma_{qtR}(G)=3$ if and only if $G$ is a graph of maximum degree $n-1$,
  \item $\gamma_{qtR}(G)=4$ if and only if $\gamma(G)=\gamma_t(G)=2$, and
  \item $\gamma_{qtR}(G)=n$ if and only if $G$ is a path or a cycle of order at least three.
\end{enumerate}
\end{proposition}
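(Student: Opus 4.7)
My plan is to handle the two outer bounds $3\le\gamma_{qtR}(G)\le n$ first and then the three characterizations in the order they appear. The upper bound is immediate from the constant function $f\equiv 1$, which is a QTRDF since the first condition is vacuous and, $G$ being connected on $n\ge 3$ vertices, no vertex is isolated in $G[V_1\cup V_2]=G$. The lower bound $3\le\gamma_{qtR}(G)$ was already justified in the remark preceding the proposition (any QTRDF either has $V_2=\emptyset$, forcing $V_0=\emptyset$ and weight $n\ge 3$, or contains a $2$-vertex that needs a positive-weight neighbor plus itself).

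For (i), the backwards direction is a direct construction: if $v$ has degree $n-1$ and $v'$ is any neighbor of $v$, label $v$ with $2$, $v'$ with $1$, and all other vertices with $0$. For the forward direction I would start from a $\gamma_{qtR}(G)$-function $f=(V_0,V_1,V_2)$ of weight $3$ and split into the two cases compatible with $|V_1|+2|V_2|=3$, namely $(|V_1|,|V_2|)\in\{(3,0),(1,1)\}$. The first case forces $V_0=\emptyset$ and hence $n=3$, and every connected graph on three vertices has maximum degree $2=n-1$. The second case forces the unique vertex $v$ of $V_2$ to be adjacent to the unique vertex of $V_1$ (otherwise it is isolated in $G[V_1\cup V_2]$) and to every vertex of $V_0$ (to dominate them), so $\deg_G(v)=n-1$.

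Part (ii) is the most delicate, and I expect to lean heavily on the theorem stated just above the proposition. For the forward direction, assume $\gamma_{qtR}(G)=4$. By (i), $\Delta(G)\ne n-1$, so $\gamma(G)\ge 2$. The prior theorem gives $\gamma_t(G)\le\gamma_{qtR}(G)=4$, and its items (i)--(ii) assert that $\gamma_{qtR}(G)=\gamma_t(G)$ only for $P_2$ and $\gamma_{qtR}(G)=\gamma_t(G)+1$ only when $\gamma_{qtR}(G)=3$; both are excluded here, so $\gamma_t(G)\le 2$, and combined with $\gamma(G)\le\gamma_t(G)$ and $\gamma(G)\ge 2$ one obtains $\gamma(G)=\gamma_t(G)=2$. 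For the backward direction, take a total dominating set $\{u,v\}$ with $uv\in E(G)$ and label both with $2$; this yields a QTRDF of weight $4$, and since part (i) together with $\gamma(G)=2$ rules out weight $3$, the value $\gamma_{qtR}(G)=4$ follows.

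Part (iii) should fall straight out of Theorem \ref{maximum-degree}. If $\gamma_{qtR}(G)=n$, the upper bound $\gamma_{qtR}(G)\le n-\Delta(G)+2$ forces $\Delta(G)\le 2$; combined with connectivity and $n\ge 3$, this means $G$ is $P_n$ or $C_n$. Conversely, for $P_n$ or $C_n$ we have $\Delta(G)=2$, so the lower bound $\lceil 2n/\Delta(G)\rceil=n$ from the same theorem meets the general upper bound $n$. The main obstacle I anticipate is keeping the forward direction of (ii) clean: a direct case analysis on $(|V_1|,|V_2|)\in\{(4,0),(2,1),(0,2)\}$ would be tedious, so routing through the prior theorem's ready-made characterizations is essential for a compact proof.
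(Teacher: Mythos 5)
Your proposal is correct, and parts (i) and (iii) and the outer bounds follow essentially the paper's own route (the paper also gets (i) from the case split $(|V_1|,|V_2|)\in\{(3,0),(1,1)\}$ and (iii) from Theorem \ref{maximum-degree}). The genuine divergence is in the forward direction of (ii). The paper argues directly: it splits a weight-$4$ QTRDF into the cases $(|V_1|,|V_2|)\in\{(4,0),(2,1),(0,2)\}$ and in each case exhibits a total dominating set of size two, then combines this with $\gamma(G)\ge 2$. You instead avoid the case analysis entirely by exploiting the characterizations in the theorem preceding the proposition: from $\gamma_t(G)\le\gamma_{qtR}(G)=4$ you exclude $\gamma_t(G)=4$ via item (i) of that theorem ($\gamma_{qtR}=\gamma_t$ only for $P_2$) and $\gamma_t(G)=3$ via item (ii) ($\gamma_{qtR}=\gamma_t+1$ only when $\gamma_{qtR}=3$), leaving $\gamma_t(G)=2$. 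This is a valid deduction and is more compact, at the price of making (ii) logically dependent on the earlier theorem rather than self-contained; the paper's case analysis, while longer, also yields explicit structural information about minimum-weight functions. Your backward direction of (ii) (label an adjacent total dominating pair with $2$'s) is a direct construction where the paper cites $\gamma_{qtR}(G)\le\gamma_{tR}(G)\le 2\gamma_t(G)$; both are sound. One cosmetic remark: for the upper bound $\gamma_{qtR}(G)\le n$ via $f\equiv 1$ you do not even need connectivity, since the second condition in the definition is automatically satisfied by any vertex labeled $1$.
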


\begin{proof}
(i) If $G$ is a graph of maximum degree $n-1$, then we can easily see that $\gamma_{qtR}(G)=3$. On the contrary, assume $\gamma_{qtR}(G)=3$ and let $g=(V_0,V_1,V_2)$ be a $\gamma_{qtR}(G)$-function. Hence, it must happen one of the following situations.
\begin{itemize}
  \item $|V_1|=3$ and $|V_2|=0$. Thus, $V_0=\emptyset$, which means $n=3$ and so $G$ is a path $P_3$ or a cycle $C_3$, and both of them have degree $n-1$.
  \item $|V_1|=1$ and $|V_2|=1$. Thus, the two vertices in $V_1\cup V_2$ are adjacent, and every other vertex not in $V_1\cup V_2$ has label zero, and so it is adjacent to the vertex in $V_2$. Therefore, the vertex in $V_2$ has degree $n-1$.
\end{itemize}

(ii) First, we assume that $\gamma_{qtR}(G)=4$ and let $f=(V_0,V_1,V_2)$ be a $\gamma_{qtR}(G)$-function. Thus, one of the following situations occurs.

\begin{itemize}
  \item $|V_1|=4$ and $|V_2|=0$. In this case, $V_0=\emptyset$ and so, $G$ is a graph of order $n=4$, and by item (i), $G$ is the path $P_4$ or the cycle $C_4$, which satisfy that $\gamma(P_4)=\gamma_t(P_4)=\gamma(C_4)=\gamma_t(C_4)=2$.

  \item $|V_1|=2$ and $|V_2|=1$. Hence, exactly one vertex of the two vertices in the set $V_1$ is adjacent to the vertex in $V_2$, and every other vertex $x\notin V_1\cup V_2$ ($x\in V_0$) is adjacent to the vertex in $V_2$. Thus, the vertex in $V_2$, say $v$, together with a neighbor of the vertex in $V_1$ not adjacent to $v$ form a total dominating set, which means $\gamma_t(G)=2$. Also, since $\gamma_{qtR}(G)>3$, we have that $\gamma(G)\geq 2$. Therefore, $\gamma(G)=\gamma_t(G)=2$.

  \item $|V_1|=0$ and $|V_2|=2$. Clearly, every vertex in $V(G)\setminus V_2$ is adjacent to a vertex of $V_2$, and the two vertices in $V_2$ must be adjacent. Thus, $V_2$ is a total dominating set, which means $\gamma_t(G)=2$. Since $\gamma_{qtR}(G)>3$, we have that $\gamma(G)\geq 2$. Therefore, $\gamma(G)=\gamma_t(G)=2$.

\end{itemize}

On the other hand, let $\gamma(G)=\gamma_t(G)=2$. Notice that $G$ must have maximum degree at most $n-2$. Hence, by item (i), we have that $\gamma_{qtR}(G)\geq 4$. Notice that $\gamma_{qtR}(G)\leq \gamma_{tR}(G)\leq 2\gamma_t(G)=4$ (which is known from \cite{total-Roman-1}). Therefore, $\gamma_{qtR}(G)=4$, which completes the proof of (ii).

(iii) Assume $G$ is a path or a cycle of order at least three.
 By Theorem \ref{maximum-degree} we obtain the equality. On the other hand, let $G$ be a graph of order $n\geq 3$ satisfying $\gamma_{qtR}(G)=n$. If $G$ has maximum degree larger than two, then by Theorem \ref{maximum-degree}, we obtain $\gamma_{qtR}(G)\le n-1$, a contradiction. Thus, $G$ has maximum degree two, which means $G$ is a path or a cycle of order at least three.
\end{proof}

Another bound for the quasi-total Roman domination number of a graph $G$ can be given in terms of the packing number of $G$. To this end, it is said that a subset $B\subseteq V(G)$ is a {\em packing} in $G$ if for every pair of vertices $u,v\in B$, $N[u]\cap N[v]=\emptyset$. The {\em packing number} $\rho(G)$ is the maximum cardinality of a packing in $G$.

\begin{proposition}\label{packings}
For any graph $G$ of order $n$, $\gamma_{qtR}(G)\le n-\rho(G)(\delta(G)-2)$.
\end{proposition}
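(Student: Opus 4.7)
The plan is to exhibit a quasi-total Roman dominating function built around a maximum packing, and then count its weight using the disjointness property that defines a packing. Let $B \subseteq V(G)$ be a packing of maximum size, so $|B| = \rho(G)$ and the closed neighborhoods $N[v]$, $v\in B$, are pairwise disjoint. Since each $v\in B$ satisfies $d_G(v)\ge \delta(G)$, we may fix, for every $v\in B$, one neighbor $v'\in N(v)$. Define a function $f:V(G)\to\{0,1,2\}$ by setting $f(v)=2$ for every $v\in B$, $f(v')=1$ for the chosen neighbor of each $v\in B$, $f(u)=0$ for every $u\in N(v)\setminus\{v'\}$ with $v\in B$, and $f(w)=1$ for every remaining vertex $w\in V(G)\setminus \bigcup_{v\in B}N[v]$.

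Next I would verify that $f$ is a QTRDF. Every vertex carrying label $0$ lies in $N(v)$ for some $v\in B$ with $f(v)=2$, so the Roman condition holds. For the quasi-total condition, every vertex with label $2$ (namely each $v\in B$) has a neighbor labeled $1$ (namely $v'$); the vertices labeled $1$ need no check because an isolated vertex of $G[V_1\cup V_2]$ labeled $1$ already satisfies the requirement by definition. Hence $f$ is indeed a QTRDF.

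Finally, I would compute $\omega(f)$. Since $B$ is a packing, the sets $N[v]$, $v\in B$, are pairwise disjoint, so within each such set the weight contribution of $f$ is exactly $2+1+0\cdot(d_G(v)-1)=3$, totalling $3\rho(G)$ over $\bigcup_{v\in B}N[v]$. Meanwhile, the number of remaining vertices is
\[
\Bigl|V(G)\setminus \bigcup_{v\in B}N[v]\Bigr| \;=\; n-\sum_{v\in B}(d_G(v)+1) \;\le\; n-\rho(G)(\delta(G)+1),
\]
and each of them contributes $1$ to the weight. Adding both parts gives
\[
\gamma_{qtR}(G)\le \omega(f) \le 3\rho(G) + n - \rho(G)(\delta(G)+1) = n - \rho(G)(\delta(G)-2),
\]
as required. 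There is no real obstacle in this argument; the only delicate point is ensuring that the single neighbor $v'$ labeled $1$ for each $v\in B$ exists (which follows from $\delta(G)\ge 1$, noting that if $\delta(G)\in\{0,1\}$ the claimed bound is weaker than the trivial $\gamma_{qtR}(G)\le n$ from Proposition \ref{3-and-n} and thus requires no separate work beyond that observation).
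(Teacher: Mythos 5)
Your proof is correct and follows essentially the same approach as the paper: label a maximum packing with $2$, one chosen neighbor of each packing vertex with $1$, the rest of each closed neighborhood with $0$, and all remaining vertices with $1$, then bound the weight $3\rho(G)+n-\sum_{v\in B}(d_G(v)+1)$ using $d_G(v)\ge\delta(G)$. Your write-up is in fact slightly more careful than the paper's, since you explicitly verify the QTRDF conditions and dispose of the degenerate cases $\delta(G)\le 1$.
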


\begin{proof}
Let $S$ be a packing in $G$. We now consider a function $f=(V_0,V_1,V_2)$ such that $V_2=S$, $V_1$ is formed by exactly one neighbor of each vertex of $S$ together with that vertices not adjacent to any vertex in $S$, and $V_0$ the remaining vertices. We can easily observe that $f$ is a QTRDF on $G$. Since $f$ has weight $3\rho(G)+n-\sum_{v\in S}(d_G(v)+1)$, we directly deduce the bound.
\end{proof}

Clearly, the bound above is only useful whether $G$ has degree at least two. Note that precisely, if $G$ has minimum degree two, then we get $\gamma_{qtR}(G)\le n$, which is tight for the case of cycles. Moreover, if $G$ is a complete graph of order at least three, then $\rho(G)=1$ and $\delta(G)=n-1$, which leads to $\gamma_{qtR}(G)\le 3$, that is also a sharp value.

An special class of graphs $G$ for which its packing number equals its domination number is known as efficient domination graphs. Notice that in such case, the closed neighborhood centered in vertices belonging to a $\rho(G)$-set partitions the vertex set of $G$. Thus, the next result can be deduced from the proof of Proposition \ref{packings}.

\begin{proposition}\label{packings-1}
For any efficient domination graph $G$, $\gamma_{qtR}(G)\le 3\rho(G)$.
\end{proposition}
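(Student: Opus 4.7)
The plan is to specialize the argument of Proposition \ref{packings} using the structural feature of efficient domination graphs, namely that a $\rho(G)$-set $S$ is simultaneously a dominating set and, moreover, the family $\{N_G[v]\}_{v\in S}$ partitions $V(G)$. The whole proof is essentially a bookkeeping check on how the weight formula from Proposition \ref{packings} collapses in this special case.

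First, I would pick a packing $S$ with $|S|=\rho(G)$ that is also a dominating set (this exists precisely because $G$ is an efficient domination graph), so that the closed neighborhoods $N_G[v]$ for $v\in S$ partition $V(G)$ and, in particular, $\sum_{v\in S}(d_G(v)+1)=n$. Then I would reuse verbatim the construction from the proof of Proposition \ref{packings}: put $V_2=S$; for each $v\in S$, choose one neighbor and place it in $V_1$ (such a neighbor exists since we are in the connected, nontrivial setting, so no vertex is isolated); and let $V_0$ consist of all remaining vertices. Note that vertices ``not adjacent to any vertex in $S$'' do not exist here because $S$ dominates $V(G)$, so $V_1$ has exactly $|S|=\rho(G)$ elements.

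Next I would verify that $f=(V_0,V_1,V_2)$ is a QTRDF: every vertex of $V_0$ lies in $V(G)\setminus S$ and hence, by the partition property, is adjacent to the unique vertex of $S=V_2$ whose closed neighborhood contains it; and no vertex of $V_1\cup V_2$ is isolated in $G[V_1\cup V_2]$, since each vertex of $V_2$ is joined to its chosen neighbor in $V_1$ and each vertex of $V_1$ is joined to the vertex of $V_2$ that picked it. Computing the weight from the formula $3\rho(G)+n-\sum_{v\in S}(d_G(v)+1)$ derived in the proof of Proposition \ref{packings} and substituting $\sum_{v\in S}(d_G(v)+1)=n$ yields $\omega(f)=3\rho(G)$, and therefore $\gamma_{qtR}(G)\le 3\rho(G)$.

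There is no real obstacle: the only thing to be a bit careful about is the degenerate possibility that some $v\in S$ might be isolated (which would break the step of choosing a neighbor to place in $V_1$), but this is ruled out by the standing assumption that $G$ is a nontrivial connected graph, under which every vertex has degree at least one.
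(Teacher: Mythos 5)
Your proposal is correct and follows essentially the same route as the paper: the paper simply remarks that for an efficient domination graph the closed neighborhoods of a $\rho(G)$-set partition $V(G)$, so that in the weight formula $3\rho(G)+n-\sum_{v\in S}(d_G(v)+1)$ from the proof of Proposition \ref{packings} the last two terms cancel. You have merely written out explicitly the verification that the paper leaves implicit, including the (correct) observation that the ``undominated'' part of $V_1$ is empty here.
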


\section{A Nordhaus-Gaddum bound}

In 1956 (see \cite{ng}), Nordhaus and Gaddum began some research idea, which has further became very useful and popular in the investigation concerning graph theory, and probably more intensively, in the area of domination in graphs. They gave lower and upper bounds on the sum and product of the chromatic numbers of a graph and its complement in terms of the order of the graph. From this seminal work, several results concerning $\Psi(G)+\Psi(\overline{G})$ or $\Psi(G)\Psi(\overline{G})$ for some given parameter $\Psi$ of graphs have been appeared. In consequence, searching Nordhaus-Gaddum type inequalities has centered the attention of a large number of investigations. For interested readers, we suggest the fairly complete survey \cite{ah}. For the specific case of Roman domination parameters, we suggest for example the works \cite{amjadi,chambers}.

By using Theorem \ref{maximum-degree}, we can deduce a Nordhaus-Gaddum bound for the quasi-total Roman domination number (only for the sum version). To this end, we first notice the following. Since it cannot happen that a graph $G$ of order $n$, and its complement $\overline{G}$, will both have maximum degree $n-1$, the sum $\gamma_{qtR}(G)+\gamma_{qtR}(\overline{G})$ will be equal to six only whether $G$ has order three. Note that by Proposition \ref{3-and-n}, $\gamma_{qtR}(G)\ge 3$ for any graph $G$ of order $n\ge 3$, and the equality is attained only for graphs of maximum degree $n-1$. In consequence, in our next result we just consider graphs of order at least four. Moreover, not connected graphs are considered (in contrast with our comment from the introductory section). From now on, we use the following families of graphs.
\begin{itemize}
  \item $\mathcal{F}_1$: All the graphs of order $n$ and maximum degree $n-1$, that have exactly one vertex of degree $n-1$ and at least one vertex of degree one.
  \item $\mathcal{F}'_1$: All the graphs which are obtained as the complement of a graph in $\mathcal{F}_1$.
\end{itemize}
We also remark that in this section we involve the study of not connected graphs as well.

\begin{theorem}
If $G$ is a graph of order $n\ge 4$, then $7\le \gamma_{qtR}(G)+\gamma_{qtR}(\overline{G})\le n+5$.
Moreover,
\begin{itemize}
  \item[{\rm (i)}] $\gamma_{qtR}(G)+\gamma_{qtR}(\overline{G})=7$ if and only if $G,\overline{G}\in \{K_4, \overline{K_4}, K_4-e, \overline{K_4-e},\mathcal{F}_1,\mathcal{F}'_1\}$, and
  \item[{\rm (ii)}] $\gamma_{qtR}(G)+\gamma_{qtR}(\overline{G})=n+5$ if and only if $G$ is $C_5$.
\end{itemize}
\end{theorem}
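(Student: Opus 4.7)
The plan is to prove the lower bound, characterize the case of equality $7$, prove the upper bound, and finally characterize the case of equality $n+5$.

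For the \emph{lower bound}, I invoke Proposition \ref{3-and-n}(i): $\gamma_{qtR}(H)\geq 3$ for every graph $H$ of order at least $3$, with equality iff $H$ has a vertex of degree $|V(H)|-1$. A universal vertex of $G$ becomes isolated in $\overline G$, so $\overline G$ cannot also have a universal vertex when $n\geq 3$; hence $\gamma_{qtR}(G)$ and $\gamma_{qtR}(\overline G)$ cannot both equal $3$, and the sum is at least $3+4=7$. For equality, we may assume without loss of generality $\gamma_{qtR}(G)=3$ and $\gamma_{qtR}(\overline G)=4$, and let $U$ denote the set of universal vertices of $G$ with $k=|U|\geq 1$. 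By the disconnected-graph formula, $\gamma_{qtR}(\overline G)=k+\gamma_{qtR}(\overline{G[V(G)\setminus U]})$ when $V(G)\setminus U\neq\emptyset$, where the second summand has no isolated vertex (any such vertex would be universal in $G$). A case split on $|V(G)\setminus U|$ gives: $V(G)=U$ yields $G=K_n$ with $n=4$, so $G=K_4$; $|V(G)\setminus U|=2$ forces the two non-universal vertices to be non-adjacent (otherwise they would be universal), yielding $G=K_4-e$; and $|V(G)\setminus U|\geq 3$ combined with $\gamma_{qtR}(\overline{G[V(G)\setminus U]})=3$ forces $k=1$ and the existence of a universal vertex of $\overline{G[V(G)\setminus U]}$, i.e., a pendant vertex in $G$, so $G\in\mathcal{F}_1$. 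The dual classes $\overline{K_4},\overline{K_4-e},\mathcal{F}'_1$ arise by interchanging $G$ and $\overline G$.

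For the \emph{upper bound}, when $\Delta(G)\geq 2$ and $\Delta(\overline G)\geq 2$ (equivalently $\delta(G)\leq n-3$), Theorem \ref{maximum-degree} applied to both graphs gives
\[\gamma_{qtR}(G)+\gamma_{qtR}(\overline G)\leq (n-\Delta(G)+2)+(n-\Delta(\overline G)+2)=n+5-(\Delta(G)-\delta(G))\leq n+5.\]
The boundary cases $\Delta(G)\leq 1$ (where $G$ is a matching plus isolated vertices) and $\delta(G)\geq n-2$ (its complementary dual) are handled by direct computation; each yields sum $\leq n+4$.

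For \emph{equality} $n+5$, the chain forces $G$ to be $d$-regular with $2\leq d\leq n-3$, and both bounds of Theorem \ref{maximum-degree} tight: $\gamma_{qtR}(G)=n-d+2$ and $\gamma_{qtR}(\overline G)=d+3$. In the sub-case $d=2$, $G$ is a disjoint union of cycles with $\gamma_{qtR}(G)=n$. If $G$ has two or more cycle components, then for $u,v$ in distinct components $N_{\overline G}[u]\cup N_{\overline G}[v]=V(G)$ (since each omits only the two cycle-neighbors in $G$, and these sets are disjoint), so $\gamma_{qtR}(\overline G)\leq 4$, contradicting the required value $5$. Hence $G=C_n$; a direct verification (for $n\geq 6$, two vertices at cycle-distance three are adjacent in $\overline{C_n}$ and their closed neighborhoods cover $V(\overline{C_n})$) shows $\gamma_{qtR}(\overline{C_n})=5$ only when $n=5$. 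The sub-case $d=n-3$ is complementary: $\overline G$ is $2$-regular and the previous analysis applied to $\overline G$ gives $\overline G=C_5$, hence $G=\overline{C_5}=C_5$. The principal obstacle is the intermediate range $3\leq d\leq n-4$. Here Proposition \ref{packings} applied to $G$ (with $\delta(G)=d\geq 3$) and to $\overline G$ (with $\delta(\overline G)=n-1-d\geq 3$) forces $\rho(G)=\rho(\overline G)=1$, so both $G$ and $\overline G$ are connected of diameter at most $2$. Proceed by dichotomy. If some edge $uv\in E(G)$ has at most $d-3$ common neighbors, then $V_2=\{u,v\}$ together with $V_1=V(G)\setminus(N_G[u]\cup N_G[v])$ produces a QTRDF of weight $4+n-(2d-|N_G(u)\cap N_G(v)|)\leq n-d+1$, strictly less than $n-d+2$, contradicting tightness. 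Otherwise every edge lies in at least $d-2$ triangles, which translates to $G[N_G(v)]$ having minimum degree at least $d-2$ for every vertex $v$; equivalently, the non-edges of $G[N_G(v)]$ form a matching $M_v$. A direct count shows that $|M_v|\geq 1$ would either produce an edge of $G$ with only $d-3$ common neighbors (violating our assumption) or force all relevant outside-neighbors to coincide in a single vertex, which combined with $\rho(G)=1$ gives $n\leq d+3$, contradicting $n\geq d+4$. Hence $M_v=\emptyset$, so $N_G[v]=K_{d+1}$ for every $v$; the connectivity of $G$ then gives $G=K_{d+1}$, contradicting $d\leq n-4$. This rules out the range and identifies $G=C_5$ as the unique equality graph.
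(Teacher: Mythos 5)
Your proposal is correct in its overall architecture and agrees with the paper on the lower bound, the upper bound (same boundary cases $\Delta(G)\le 1$ and its complement, then Theorem \ref{maximum-degree} applied to both graphs), and on part (i), where your use of the disconnected-graph formula $\gamma_{qtR}(\overline{G})=|U|+\gamma_{qtR}(\overline{G}[V(G)\setminus U])$ is a cleaner reorganization of the paper's case analysis on the number of universal vertices, arriving at the same families. Where you genuinely diverge is the hard case of part (ii), regular of degree $3\le d\le n-4$. The paper keeps both graphs in play: it first disposes of diameter $\ge 3$, then for diameter two builds explicit QTRDFs on $G$ \emph{and} on $\overline{G}$ from the common neighborhoods of a non-adjacent pair (getting $k\le r+1$) and of an adjacent pair (getting $k\le t+2$), and finishes by double-counting edges between $N[a]$ and its complement to force $k=3$, $n=7$, which is impossible by parity. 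You instead work entirely inside $G$: Proposition \ref{packings} plus tightness of $\gamma_{qtR}(G)=n-d+2$ forces $\rho(G)=1$ (hence diameter two), and then the dichotomy on common neighbors of \emph{edges} shows that either some edge yields a QTRDF of weight $n-d+1$ or every closed neighborhood is a clique minus a matching, which your final count collapses to cliques, contradicting $n\ge d+4$. This is arguably slicker, since $\overline{G}$ is never needed in this range, and it proves the stronger local statement that no $d$-regular graph with $d\ge 3$, $n\ge d+4$ and $\rho=1$ attains $\gamma_{qtR}=n-\Delta+2$.

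One caveat: the step you label ``a direct count'' is the only place the argument is thinner than it should be, and the dichotomy you announce there is not quite the one that actually occurs. If $x,y\in N(v)$ are the endpoints of a non-edge of $M_v$, each has exactly one neighbor $x'$, resp.\ $y'$, outside $N[v]$, and applying the matching property to $N(x)$ and $N(y)$ shows $x'$ and $y'$ are adjacent to all of $N(v)\setminus\{x,y\}$. If $x'\ne y'$, any $z\in N(v)\setminus\{x,y\}$ (nonempty as $d\ge 3$) then has degree at least $(d-2)+|\{v,x',y'\}|=d+1$, a degree contradiction rather than a common-neighbor one; if $x'=y'$, the component of $v$ is contained in $N[v]\cup\{x'\}$, giving $n\le d+2$. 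Both branches do close, so the gap is fillable, but as written the reader must reconstruct this count, and your description of what it yields should be corrected accordingly.
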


\begin{proof}
Since $\gamma_{qtR}(G)\ge 3$ for any graph $G$ of order at least three and it cannot happen that $G$ and its complement $\overline{G}$, will both have maximum degree $n-1$, by using Proposition \ref{3-and-n}, the lower bound trivially follows. On the other hand, if $G$ has maximum degree zero, then $\overline{G}$ is a complete graph, and so $\gamma_{qtR}(G)+\gamma_{qtR}(\overline{G})=n+3<n+5$. If $G$ has maximum degree one (which means $\gamma_{qtR}(G)=n$), then $\overline{G}$ has maximum degree at least $n-2$. In such case, we can readily observe that $\gamma_{qtR}(\overline{G})\le 4$. Thus $\gamma_{qtR}(G)+\gamma_{qtR}(\overline{G})\le n+4<n+5$. Since $G$ and $\overline{G}$ can be exchanged without loss of generality, from now on we may assume that both $G$ and $\overline{G}$ have maximum degree at least two. Hence, from Theorem \ref{maximum-degree}, we have that
\begin{equation}\label{nord-has}
\gamma_{qtR}(G)+\gamma_{qtR}(\overline{G})\le n-\Delta(G)+2+n-\Delta(\overline{G})+2=n-\Delta(G)+\delta(G)+5\le n+5,
\end{equation}
and the proof of the upper bound is completed.

(i) If $G,\overline{G}\in \{K_4, \overline{K_4}, K_4-e, \overline{K_4-e},\mathcal{F}_1,\mathcal{F}'_1\}$, then it is straightforward to observe that $\gamma_{qtR}(G)+\gamma_{qtR}(\overline{G})=7$.

On the contrary, we assume now that $\gamma_{qtR}(G)+\gamma_{qtR}(\overline{G})=7$. If $G$ has maximum degree zero, then $\overline{G}$ is a complete graph, which means $7=\gamma_{qtR}(G)+\gamma_{qtR}(\overline{G})=n+3$, and so, $n=4$ (or equivalently $G,\overline{G}\in \{K_4, \overline{K_4}\}$). If $G$ has maximum degree one, then its complement has maximum degree at least $n-2$. If $\overline{G}$ has maximum degree $n-2$, then $7=\gamma_{qtR}(G)+\gamma_{qtR}(\overline{G})=n+4$, which means $n=3$ and this is not possible. If $\overline{G}$ has maximum degree $n-1$, then $7=\gamma_{qtR}(G)+\gamma_{qtR}(\overline{G})=n+3$, which means $n=4$ (or equivalently $G,\overline{G}\in \{K_4, \overline{K_4}\}$). Hence, without loss of generality, we may assume that both $G$ and $\overline{G}$ have maximum degree at least two.

If $G$ has maximum degree $n-1$, then $\gamma_{qtR}(G)=3$. Thus, $7=\gamma_{qtR}(G)+\gamma_{qtR}(\overline{G})=\gamma_{qtR}(\overline{G})+3$, and so $\gamma_{qtR}(\overline{G})=4$. Since $\overline{G}$ has at least one isolated vertex (a vertex of degree $n-1$ in $G$), say $u$, it must happen that $V(\overline{G})\setminus\{u\}$ induces a graph of maximum degree $n-2$, which means there must be a vertex of degree one in $G$ (note that $u$ must have label one under any QTRDF on $\overline{G}$ and so, the quasi-total Roman domination number of the graph induced by $V(\overline{G})\setminus\{u\}$ is three). If $G$ has only one vertex of degree $n-1$, then $G\in \mathcal{F}_1$. If $G$ has exactly two vertices, say $x,y$, of degree $n-1$, then again $\gamma_{qtR}(\overline{G})=4$ and $x,y$ are isolated vertices in $\overline{G}$, which means they must have label one under any QTRDF on $\overline{G}$. Thus, the subgraph induced by $V(\overline{G})\setminus\{x,y\}$ must have only two vertices, and they should be adjacent, since $G$ has only two vertices of degree $n-1$. Consequently, $G$ is a complete graph of order four minus one edge, and $\overline{G}$ (its complement), is given by the union of two isolated vertices and $P_2$. If $G$ has three vertices, say $x_1,x_2,x_3$, of degree $n-1$, then as before $\gamma_{qtR}(\overline{G})=4$ and $x_1,x_2,x_3$ are isolated vertices in $\overline{G}$, and they should have label one under any QTRDF on $\overline{G}$. Thus, we can deduce that $V(\overline{G})\setminus\{x_1,x_2,x_3\}$ has only one vertex, and consequently, $G$ is $K_4$ and $\overline{G}$ is an edgeless graph of order four. If $G$ has more than three vertices of degree $n-1$, then we obtain a contradiction.

Finally, without loss of generality we may assume that both $G$ and $\overline{G}$ have maximum degree smaller than $n-1$. But then, by using Proposition \ref{3-and-n}, we obtain that $\gamma_{qtR}(G)+\gamma_{qtR}(\overline{G})\ge 8$, which is not possible, and this completes the proof of (i).\\

(ii) It is clear that $\gamma_{qtR}(C_5)+\gamma_{qtR}(\overline{C_5})=10=n+5$. On the other hand, assume that $\gamma_{qtR}(G)+\gamma_{qtR}(\overline{G})=n+5$. Hence, there should be equalities in the chain of inequalities (\ref{nord-has}), which leads to $\Delta(G)=\delta(G)$, or equivalently that $G$ is a $k$-regular graph for some integer $k$. Also, without loss of generality, we may assume that $k\le (n - 1)/2$. If $k=0$, then $\gamma_{qtR}(G)=n$ and $\gamma_{qtR}(\overline{G})=3$ ($\overline{G}$ is a complete graph), and so $\gamma_{qtR}(G)+\gamma_{qtR}(\overline{G})=n+3$, a contradiction. If $k=1$, then we can readily see that $\gamma_{qtR}(G)=n$ and $\gamma_{qtR}(\overline{G})=4$ ($\overline{G}$ is an $(n-2)$-regular graph). Thus, $\gamma_{qtR}(G)+\gamma_{qtR}(\overline{G})=n+4$,  which is again a contradiction. If $k=2$, then $G$ is a cycle, and by Proposition \ref{3-and-n} (iii), $\gamma_{qtR}(G)=n$. If $G$ is $C_4$, then $\gamma_{qtR}(G)+\gamma_{qtR}(\overline{G})=7<n+5$, which is not possible. If $G$ is $C_5$, then $\overline{G}$ is also $C_5$, and we get the equality $\gamma_{qtR}(G)+\gamma_{qtR}(\overline{G})=10=n+5$. If $n\ge 6$, then $\overline{G}$ always contains two adjacent vertices that dominates the vertex set of $\overline{G}$. Consequently, $\gamma_{qtR}(\overline{G})\le 4$, which leads to $\gamma_{qtR}(G)+\gamma_{qtR}(\overline{G})\le n+4$, by also using Proposition \ref{3-and-n} (iii), and this is a contradiction.

From now on we assume $k\ge 3$. We now consider $G$ has diameter at least three. In this sense, there are two vertices, say $z_1,z_2$, such that $N[z_1]\cap N[z_2]=\emptyset$. Let $z'_1\in N(z_1)$ and $z'_2\in N(z_2)$. Hence, we observe that the function $f=(N(\{z_1,z_2\})\setminus\{z'_1,z'_2\},(V(G)\setminus N[\{z_1,z_2\}])\cup\{z'_1,z'_2\},\{z_1,z_2\})$ is a QTRDF of weight $n-2(k+1)+6=n-2k+4$. Thus $\gamma_{qtR}(G)\le n-2k+4$. On the other hand, since $G$ has diameter at least three, there are two adjacent vertices in $\overline{G}$ that dominates $\overline{G}$, and this means $\gamma_{qtR}(\overline{G})\le 4$. As a consequence, $\gamma_{qtR}(G)+\gamma_{qtR}(\overline{G})\le n-2k+8$, and since $k\ge 3$, we obtain that $\gamma_{qtR}(G)+\gamma_{qtR}(\overline{G})\le n+2$, which is not possible.

Accordingly, it must happen $G$ has diameter two, and by the symmetry of the proving process, we may assume also $\overline{G}$ has diameter two. Let $a$ be any vertex of $G$. We now consider any vertex $b$ at distance two from $a$ (such vertex exists because $G$ has maximum degree smaller than $n-1$), and let $r$ be the number of neighbors in common between $a$ and $b$. Clearly $1\le r\le k$. Let $R=N(a)\cap N(b)$ and let $c\in R$. We observe that the function $f_1=(N_G(\{a,b\})\setminus\{c\},V(G)\setminus N_G[\{a,b\}]\cup\{c\},\{a,b\})$ is a QTRDF of weight $n-(2(k+1)-r)+5=n-2k+r+3$ on $G$. Now, the function $f_2=(V(G)\setminus(R\cup \{a,b\}),R,\{a,b\})$ is a QTRDF of weight $r+4$ on $\overline{G}$. Thus, $n+5=\gamma_{qtR}(G)+\gamma_{qtR}(\overline{G})\le n-2k+r+3+r+4=n-2k+2r+7$, which leads to $k\le r+1$. Consequently, we obtain that any vertex $b$ not in $N[a]$ has at least $k-1$ neighbors in $N(a)$.

Consider now a vertex $d\in N(a)$. Let $t$ be the number of neighbors in common between $a$ and $d$. Clearly $0\le t\le k-1$. Let $T=N(a)\cap N(d)$. We observe that the function $f_3=(N_G(\{a,d\}),V(G)\setminus N_G[\{a,d\}],\{a,d\})$ is a QTRDF of weight $n-(2k-t)+4=n-2k+t+4$ on $G$. If $n\le 2k$, then $N_G[\{a,d\}]=V(G)$ and the function $f_3$ has weight four and we can construct a QTRDF of weight six on $\overline{G}$. Thus,  $n+5=\gamma_{qtR}(G)+\gamma_{qtR}(\overline{G})\le 10$, which means $n\le 5$ but there are no graphs of order at most five satisfying the requirements at this point. Hence, we may assume $n\ge 2k+1$ and consider a vertex $d'\notin N_G(a,d)$.

Now, the function $f_4=(V(G)\setminus(T\cup \{a,d,d'\}),T\cup \{d'\},\{a,d\})$ is a QTRDF of weight $t+5$ on $\overline{G}$. Thus, $n+5=\gamma_{qtR}(G)+\gamma_{qtR}(\overline{G})\le n-2k+t+4+t+5=n-2k+2t+9$, which leads to $k\le t+2$. Consequently, we obtain that any vertex $d\in N[a]$ has at least $k-2$ neighbors in $N[a]$, which is equivalent to say that $d$ has at most two neighbors outside $N[a]$.

By using the conclusions above, we shall now count the edges between vertices in $N[a]$ and outside $N(a)$ in two directions. That is, it must happen $(k-1)(n-k-1)\le 2k$, which leads to $n\le \frac{k^2+2k-1}{k-1}$. Since $n\ge 2k+1$, we finally deduce that $k\le 3$ and, as a consequence, $k=3$ ($G$ is $3$-regular). Since $2k+1\le n\le \frac{k^2+2k-1}{k-1}$, it must be $n=7$, but there are not $3$-regular graphs of order $n=7$, and this completes the proof of item (ii).
\end{proof}

\end{document}